  \numberwithin{equation}{section} 
  \date{}
\theoremstyle{plain} 
    \newtheorem{theorem}{Theorem}
    \newtheorem{lemma}[theorem]{Lemma}
    \newtheorem{proposition}[theorem]{Proposition}
\theoremstyle{definition} 
    \newtheorem{remark}[theorem]{Remark}
\DeclareMathOperator{\R}{\mathbb{R}}
\DeclareMathOperator{\Z}{\mathbb{Z}}
\DeclareMathOperator{\N}{\mathbb{N}}
\DeclarePairedDelimiter\floor{\lfloor}{\rfloor}
\DeclareMathOperator{\De}{d}
\newcommand{\e}{\mathrm{e}}
\newcommand{\vr}{\varphi}
\newcommand{\var}{\mathbf{Var}}
\newcommand{\prob}{\mathbf{P}}
\newcommand{\E}{\mathbf{E}}
\newcommand{\la}{\left\langle}
\newcommand{\ra}{\right\rangle}
\definecolor{darkred}{rgb}{1,0,0}
\definecolor{darkgreen}{rgb}{0,0.6,0}
\definecolor{darkblue}{rgb}{0,0,1}
\definecolor{darkagenta}{rgb}{0.6,0,0.6}
\begin{document}
\title{The scaling limit of the $(\nabla+\Delta)$-model}

\author[A. Cipriani]{Alessandra Cipriani}
\address{TU Delft (DIAM), Building 28, van Mourik Broekmanweg 6, 2628 XE, Delft, The Netherlands}
\email{A.Cipriani@tudelft.nl}
\author[B. Dan]{Biltu Dan}
\author[R.~S.~Hazra]{Rajat Subhra Hazra}
\thanks{AC is supported by grant 613.009.102 of the Netherlands Organisation for
Scientific Research (NWO). RSH acknowledges MATRICS grant from SERB and the Dutch stochastics cluster STAR (Stochastics – Theoretical and Applied Research) for an invitation to TU Delft where part of this work was carried out. The authors thank Francesco Caravenna for helpful discussions, and an anonymous referee for insightful remarks and comments on a previous draft of the work.}
\address{ISI Kolkata, 203, B.T. Road, Kolkata, 700108, India}
\email{biltudanmath@gmail.com, rajatmaths@gmail.com}

\date{}

\begin{abstract}
In this article we study the scaling limit of the interface model on $\Z^d$ where the Hamiltonian is given by a mixed gradient and Laplacian interaction. We show that in any dimension the scaling limit is given by the Gaussian free field. We discuss the appropriate spaces in which the convergence takes place. While in infinite volume the proof is based on Fourier analytic methods, in finite volume we rely on some discrete PDE techniques involving finite-difference approximation of elliptic boundary value problems. 
\end{abstract}
\keywords{Mixed model, Gaussian free field, membrane model, random interface, scaling limit}
\subjclass[2000]{31B30, 60J45, 60G15, 82C20}
\maketitle
\section{Introduction}
The $(\nabla+\Delta)$-model is a special instance of a more general class of random interfaces in which the interaction is governed by the exponential of an energy function $H$, called Hamiltonian. More specifically, random interfaces are fields $\vr=(\vr_{x})_{x\in\Z^{d}}$, whose distribution
is determined by a probability measure on $\mathbb{R}^{\mathbb{Z}^{d}}$,
$d\ge1$. The probability measure is given (formally) by
\begin{equation}\label{eq:Gibbs}
\prob_{\Lambda}(\mathrm{d}\vr):=\frac{\mathrm{e}^{-H(\vr)}}{Z_\Lambda}\prod_{x\in{\Lambda}}\mathrm{d}\vr_{x}\prod_{x\in\mathbb{Z}^{d}\setminus \Lambda}\delta_{0}(\mathrm{d}\vr_{x}),
\end{equation}
where $\Lambda\Subset\mathbb{Z}^{d}$ is a finite subset, $\mathrm{d}\vr_{x}$ is the Lebesgue measure on $\R$, $\delta_{0}$ is the Dirac measure at $0,$ and $Z_{\Lambda}$ is a normalizing constant.
We are imposing zero boundary conditions: almost surely $\vr_{x}=0$
for all $x\in\mathbb{Z}^{d}\setminus{\Lambda}$, but the definition
holds for more general boundary conditions. In this article we consider the special case when the Hamiltonian is given by 
\begin{equation}\label{eq:Ham_def}H(\vr)=\sum_{x\in\Z^d}\left(\kappa_1\|\nabla\vr_x\|^2+\kappa_2(\Delta\vr_x)^2\right)\end{equation}
where $\|\cdot\|$ denotes the Euclidean norm, $\nabla$ is the discrete gradient and $\Delta$ is the discrete Laplacian defined respectively by 
\begin{align*}
 \nabla f(x)&=(f(x+e_i)-f(x))_{i=1}^d\\
 \Delta f(x)&= \frac1{2d}\sum_{i=1}^{d} (f(x+e_i)+f(x-e_i)-2f(x)).
\end{align*}
for any $x\in \Z^d$, $f:\Z^d\to\R$, and $\kappa_1,\,\kappa_2$ are two positive constants. In the physics literature, the above Hamiltonian is considered to be the energy of a semiflexible membrane (or semiflexible polymer if $d=1$) where the parameters $\kappa_1$ and $\kappa_2$ are the lateral tension and the bending rigidity, respectively. The application of Gibbs measures, in particular the $(\nabla+\Delta)$-model, to the theory of biological membranes can be found in~\cite{Leibler:2006, ruiz2005phase, lipowsky1995generic}. In the works of \cite{borecki2010, CarBor} this model was studied in $d=1$ under the influence of pinning in order to understand the localization behavior of the polymer.

The mixed model interpolates between two well-known random interfaces. Indeed, in the purely gradient case ($\kappa_2=0$) one recovers the measure of the discrete Gaussian free field (DGFF). It has great importance in statistical mechanics, and we refer the reader to the reviews by~\cite{Sheff, ASS, ZeiBRW} for further details and existing results. The case of the pure Laplacian interaction, that is, when $\kappa_1=0$, is called membrane or bilaplacian model. It differs from the DGFF in that it lacks a random walk representation for the finite volume covariances, and might have negative correlation. Recent developments around the properties of the model concern its extremes
\citep{Cip13, CCHInterfaces} and the entropic repulsion event handled in \cite{Sakagawa,Kurt_d4}.

In \citet[Remark 9]{CarBor} it was conjectured that, in the case of pinning for the one-dimensional $(\nabla+\Delta)$-model, the behaviour of the free energy should resemble the purely gradient case. In view of this remark it is natural to ask if the scaling limit of the mixed model is dominated by the gradient interaction, that is, the limit is a continuum Gaussian free field (GFF). The main focus of this article is to show that such a guess is true and indeed in any dimension the mixed model approximates the Gaussian free field. We also show that when higher powers of the Laplacian are present in the Hamiltonian, then the model approximates the Gaussian free field.

We will consider the lattice approximation of both domains and $\R^d$ and investigate the behavior of the rescaled interface when the lattice size decreases to zero. We will use techniques coming from discrete PDEs which were already employed in \cite{mm_scaling} to derive the scaling limit of the membrane model. We show that in $d=1$ convergence occurs in the space of continuous functions whilst in higher dimensions the limit is no longer a function, but a random distribution, and convergence takes place in a Sobolev space of negative index. In this sense one can also think of the mixed model as a perturbation of the DGFF. This gives rise to some natural questions which we will state after presenting our main results.

\section{Main results}\label{sec:results}
\subsection{The $(\nabla + \Delta)$-model} Let $\Lambda$ be a finite subset of $\Z^d$ and $\prob_{\Lambda}$ and $H(\vr)$ be as in~\eqref{eq:Gibbs} and \eqref{eq:Ham_def} respectively.  It follows from Lemma 1.2.2 of \cite{Kurt_thesis} that the Gibbs measure~\eqref{eq:Gibbs} on $\R^{\Lambda}$ with Hamiltonian~\eqref{eq:Ham_def} exists. Note that~\eqref{eq:Ham_def} can be written as 
 \begin{equation}\label{eq:ham:mixed}
H(\vr)=\frac{1}{2}\langle\vr,(-4d\kappa_1\Delta+2\kappa_2\Delta^{2})\vr\rangle_{\ell^{2}(\mathbb{Z}^{d})}.
\end{equation}

We are interested in the ``truly'' mixed case, that is when $\kappa_1$ and $\kappa_2$ are strictly positive. 
 For our convenience we will work with the following Hamiltonian:
\begin{equation}
H(\vr)=\frac{1}{2}\langle\vr,(-\kappa_1\Delta+\kappa_2\Delta^{2})\vr\rangle_{\ell^{2}(\mathbb{Z}^{d})}\label{eq:ham}
\end{equation}
where $\kappa_1, \kappa_2$ are positive constants. Thus if we write $G_\Lambda(x,\,y):=\E_\Lambda(\vr_x\vr_y)$, it follows from Lemma 1.2.2 of \cite{Kurt_thesis} that $G_\Lambda$ solves the following discrete boundary value problem:  for $x\in \Lambda$

\begin{equation}\label{eq:cov}
\left\{\begin{array}{lr}
(-\kappa_1\Delta+\kappa_2\Delta^2) G_{\Lambda}(x,y) = \delta_x(y)& y\in \Lambda\\
G_{\Lambda}(x,y) = 0  & y\notin \Lambda\end{array}.\right.
\end{equation}

In the case $\Lambda=[-N,\,N]^d\cap\Z^d$ we will denote the measure~\eqref{eq:Gibbs} by $\prob_N$. It follows from \citet[Proposition 1.2.3]{Kurt_thesis} that in $d\ge 3$ there exists a thermodynamic limit $\prob$ of the measures $\prob_N$ as $N\uparrow\infty$. Under $\prob$, the field $(\vr_x)_{x\in \Z^d}$ is a centered Gaussian process with covariance given by
$$G(x,y)= (-\kappa_1\Delta+\kappa_2\Delta^2)^{-1}(x,\, y).$$
It follows from \citet[Lemma 5.1]{Sakagawa} that $G(x,y)\asymp \|x-y\|^{2-d}$ as $\|x-y\|\to\infty$.


\subsection{Generalizations}
In this subsection we describe some generalisations to the $(\nabla+\Delta)$ model. As discussed before, the $(\nabla+\Delta)$-model forms a basis for modelling biological membranes. In mathematical terms it can be thought of as a special case of a much more general class of models that involve higher powers of the Laplacian in the Hamiltonian. These generalizations were first considered in~\cite{Sakagawa}. Consider the measure in \eqref{eq:Gibbs} with the following formal Hamiltonian:
\begin{equation}\label{gen:ham}
H(\vr)= \frac12\sum_{i=1}^K \frac{\kappa_i}{(2d)^{a_i}} \sum_{x\in \Z^d} \left( (-\Delta)^{i/2}\vr_x\right)^2
\end{equation}
where $K\in \mathbb N$, $\kappa_i\in \mathbb R$, $i=1,\,2,\,\ldots,\, K$ and even, and if $i$ is odd, 
$$ \sum_{x\in \Z^d} \left( (-\Delta)^{i/2}\vr_x\right)^2= \sum_{x}\sum_{j=1}^d\left( \left(-\Delta\right)^{\frac{i-1}{2}}\nabla_j \vr_x\right)^2.$$
Here $\nabla_j\vr_x= \vr_{x+e_j}-\vr_x$. Also $a_i=1$ if $i$ is odd and $0$ if $i$ is even. In general, the behavior of this model depends on $\ell=\min\{i\in \mathbb N: \kappa_i\neq 0\}$. Let 
\[
J=\sum_{i=\ell}^K \kappa_i(-\Delta)^i.
\]

Under the assumption $\sum_{i=\ell}^K \kappa_i r^i>0$ for all $0<r<2$, it follows from \citet[Proposition~1.2.3]{Kurt_thesis} that the Gibbs measure $\prob_{\Lambda}$ on $\mathbb R^{\Lambda}$ with zero boundary conditions outside $\Lambda$ and Hamiltonian~\eqref{gen:ham} exists. The covariance function of the field $G_{\Lambda}(x,y)$ is uniquely defined as the Green's function of a discrete boundary value problem, namely for
$x\in \Lambda$
\begin{equation}\label{eq:cov:gen}
\left\{\begin{array}{lr}
J G_{\Lambda}(x,y) = \delta_x(y)& y\in \Lambda\\
G_{\Lambda}(x,y) = 0  & y\in \partial_K\Lambda \end{array}\right.
\end{equation}
where $\partial_K \Lambda= \{x\in \Lambda^c:\mathrm{dist}(x,\Lambda)\le K\}$ with $\mathrm{dist}(\cdot\,,\cdot)$ being the graph distance in the lattice $\Z^d$.  For this model the thermodynamic limit exists in $d\ge 2\ell+1$. Note that the $(\nabla+\Delta)$-model is a special case when we set $\kappa_1>0$, $\kappa_2>0$ and $\kappa_i=0$ otherwise. We want to study the scaling limit of the model when $\kappa_1=1$ and $\kappa_i\ge 0$ for $i=2,\,\ldots,\, K-1$ and $\kappa_K>0$.

\subsection{Main results} Since the infinite volume measure of the mixed model exists in $d\ge 3$, we split the scaling limit convergence into two parts: the infinite volume case, in which we study the $(\nabla+\Delta)$-model under $\prob$, and the finite volume case in which our object of interest is the scaling limit of measures $\prob_{\Lambda_N}$, for some chosen $\Lambda_N\Subset \Z^d$.  Therefore in the infinite volume case we are going to work with the $(\nabla+\Delta)$-model only for simplicity, whereas in the finite volume case our proof will comprehend all models with Hamiltonian~\eqref{gen:ham}.
We fix once and for all the constant $k:=1/\sqrt{2d}$. The main results are as follows.

\subsection{Infinite volume} In $d\ge 3$ (Section~\ref{sec:big_d}) we consider the infinite volume $(\nabla+\Delta)$-model $\vr=(\vr_{x})_{x\in\Z^{d}}$ with law $\prob$.
For $f\in C_c^\infty(\R^d)$ we define 
\begin{equation}\label{def:fieldS}
\left( \Psi_N, f\right): = \sum_{x\in \frac1N \Z^d} k N^{-\frac{d+2}{2}} \vr_{Nx} f(x).
\end{equation}
We will prove convergence in $\mathcal C^\alpha_{loc}$, the (separable) local Besov--H\"older space with exponent of regularity $\alpha<0$. Roughly speaking, a distribution $\Psi$ is $\alpha$-H\"older regular if for every $x\in\R^d$ and every smooth compactly supported test function $f$ one has
\[
 \lambda^{-d}\Big( \Psi,\,f(\lambda^{-1}(\cdot-x)) \Big)\le C\lambda^\alpha,\quad \lambda\to 0.
\]
Let the operator $(-\Delta)^{-1/2} : C_c^\infty(\R^d)\to L^2(\R^d)$ be defined by
$$(-\Delta)^{-1/2}f(x):= \frac1{(2\pi)^{d/2}}\int_{\R^d} \e^{\iota \la x, \theta\ra } \|\theta\|^{-1} \widehat f(\theta) \De\theta.$$
In this article we choose the following normalization of the Fourier transform:
\[
 \widehat f(\theta):=(2\pi)^{-d/2}\int_{\R^d} \e^{-\iota\la \theta,\,x\ra}f(x) \De x
\]
Observe that
\begin{equation}\label{eq:perseval}\|(-\Delta)^{-1/2}f\|^2_{L^2(\R^d)}=\int_{\R^d} \|\theta\|^{-2} |\widehat f(\theta)|^2 \De \theta.
\end{equation}
We prove the following Theorem.
\begin{theorem}[Scaling limit in $d\ge 3$]\label{thm:big_d}
One has that $\Psi_N\overset{d}\to \Psi$ in the topology of $\mathcal C^\alpha_{loc}$ for every $\alpha<-d$, where $\Psi$ is the Gaussian random field such that for every smooth, compactly
supported function $f$, $(\Psi,\,f)$ is a centered Gaussian with variance
$\|(-\Delta)^{-1/2}f\|_{L^2(\R^d)}^2$.
\end{theorem}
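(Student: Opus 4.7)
Under $\prob$ the field $\vr$ is centered Gaussian, so $\Psi_N$ is a centered Gaussian random distribution, and the candidate limit $\Psi$ is Gaussian as well. I would split convergence in distribution in $\mathcal{C}^\alpha_{loc}$ into (a) convergence of finite-dimensional distributions, and (b) tightness of the family $\{\Psi_N\}$ in $\mathcal{C}^\alpha_{loc}$. For Gaussian laws (a) is equivalent to pointwise convergence of the covariance functional $(f,g)\mapsto \E[(\Psi_N,f)(\Psi_N,g)]$ to $\la(-\Delta)^{-1/2}f,(-\Delta)^{-1/2}g\ra_{L^2(\R^d)}$, which by polarisation reduces to the single identity $\var((\Psi_N,f)) \to \|(-\Delta)^{-1/2}f\|_{L^2(\R^d)}^2$ for each $f \in C_c^\infty(\R^d)$.

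The covariance limit is a direct Fourier computation on the torus. Writing $\mu(\theta):=\tfrac{1}{d}\sum_{j=1}^d(1-\cos\theta_j)$ for the Fourier symbol of $-\Delta$, I would use
\[
G(z)=\frac{1}{(2\pi)^d}\int_{[-\pi,\pi]^d}\frac{\e^{\iota\la\theta,z\ra}}{\kappa_1\mu(\theta)+\kappa_2\mu(\theta)^2}\,\De\theta,
\]
substitute into $\var((\Psi_N,f))=k^2 N^{-(d+2)}\sum_{x,y\in N^{-1}\Z^d}f(x)f(y)\,G(N(x-y))$, and exchange summation and integration to obtain
\[
\var((\Psi_N,f))=\frac{k^2}{(2\pi)^d N^{d+2}}\int_{[-\pi,\pi]^d}\frac{|S_N(\theta)|^2}{\kappa_1\mu(\theta)+\kappa_2\mu(\theta)^2}\,\De\theta,
\]
with $S_N(\theta):=\sum_{x\in N^{-1}\Z^d}f(x)\e^{\iota N\la\theta,x\ra}$. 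The change of variable $\xi=N\theta$, the Riemann-sum identification $N^{-d}S_N(\xi/N)\to(2\pi)^{d/2}\widehat f(-\xi)$, and the low-frequency expansion $\mu(\theta)=k^2\|\theta\|^2+\O{\|\theta\|^4}$ show that the bilaplacian symbol $\kappa_2\mu^2$ contributes at order $\|\xi\|^4/N^4$ and is hence negligible against the gradient symbol $\kappa_1\mu=(1+\o 1)\kappa_1 k^2\|\xi\|^2/N^2$. Dominated convergence---with envelope provided by Schwartz decay of $\widehat f$, the uniform lower bound $\mu(\theta)\gtrsim\|\theta\|^2$ on $[-\pi,\pi]^d$, and integrability of $\|\xi\|^{-2}$ near the origin requiring $d\ge 3$---then yields the claimed limit (up to the overall constant $1/\kappa_1$ absorbed in the normalisation). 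This is the analytic expression of the heuristic that the gradient interaction dominates at large scales.

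For (b), since $\alpha<-d$ is very negative and $\mathcal{C}^\alpha_{loc}$ is correspondingly large, I would establish uniform moment bounds $\sup_N \E[\|\Psi_N\|^p_{\mathcal{C}^{\alpha'}(K)}]<\infty$ in a slightly more regular space $\mathcal{C}^{\alpha'}_{loc}$ with $\alpha<\alpha'<0$, for every compact $K\subset\R^d$, and then invoke the compact embedding $\mathcal{C}^{\alpha'}(K)\hookrightarrow\mathcal{C}^\alpha(K)$. By Gaussian hypercontractivity all $L^p$ moments are controlled by the second one, so the core estimate is a uniform bound on $\E[(\Psi_N,\eta_{\lambda,x})^2]$ for localised and rescaled test functions $\eta_{\lambda,x}$ from a dyadic/wavelet characterisation of the Besov--H\"older norm; this reduces once more to the Fourier representation above together with the two-sided estimate $G(z)\asymp\|z\|^{2-d}$ from \citet[Lemma~5.1]{Sakagawa}. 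The main obstacle is the dominated-convergence argument near $\theta=0$: away from the origin the symbol $\kappa_1\mu+\kappa_2\mu^2$ is bounded above and below by positive constants, so the analysis is concentrated in a small ball about the origin, where one must extract and use the uniform two-sided bound $\kappa_1\mu(\theta)+\kappa_2\mu(\theta)^2\asymp\|\theta\|^2$, and it is precisely here that the constraint $d\ge 3$ enters to guarantee integrability of the limiting expression.
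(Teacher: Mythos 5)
Your proposal is correct and follows essentially the same route as the paper: the finite-dimensional part is the identical Fourier computation (inverse Fourier representation of $G$, discrete Fourier sum approximated by $\widehat f$ via Poisson summation, the observation that the $\mu(\theta/N)^2$ term is $\O{N^{-2}}$ relative to $\mu(\theta/N)\asymp\|\theta\|^2/N^2$, and dominated convergence with $\|\theta\|^{-2}$ integrable only for $d\ge 3$), and tightness again rests on uniform second-moment bounds for rescaled localized test functions upgraded by Gaussianity to all $p$. The only real difference is in how the tightness criterion is packaged: the paper simply invokes \citet[Proposition~3.1]{MourratNolen} (moment bound $\E[|(\Psi_N,f_\lambda)|^p]^{1/p}\le C\lambda^{-d}$ implies tightness in $\mathcal C^\alpha$, $\alpha<-d$) and proves the bound in Fourier variables via the same sandwich lemma used for the variance, whereas you sketch a re-derivation of that criterion through a wavelet characterization of the Besov--H\"older norm plus compact embedding, and propose controlling the localized variances in real space through $G(z)\asymp\|z\|^{2-d}$ from \cite{Sakagawa}; both implementations work (yours would even give a sharper exponent than $\lambda^{-d}$ when $N\lambda\ge 1$), but they amount to the same argument, so citing the criterion as the paper does is the shorter path.
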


To prove this result, we will first show that for every test function $f$, $(\Psi_N,\,f)$ has Gaussian fluctuations, and then show the tightness of the law of $\Psi_N$ in $\mathcal C^\alpha_{loc}$ by relying on a criterion developed in~\cite{MourratNolen}.
\subsection{Finite volume}\label{subsec:finite} In the finite volume case in $d\ge 2$ (Section~\ref{sec:small_d}) we take $D$ to be a bounded domain (open, connected set) in $\R^d$ with smooth boundary. Let $\Lambda_N\subset \Z^d$ be the largest set satisfying $\Lambda_N\cup \partial_K \Lambda_N\subset N\overline D \cap \Z^d$.  
On $\Lambda_N$ we define the mixed model $\vr$ with law~\eqref{eq:Gibbs} and Hamiltonian \eqref{gen:ham} with $\kappa_1=1$ and $\kappa_i\ge 0$ for $i=2,\,\ldots, \,K-1$ and $\kappa_K>0$. Define $\Psi_N$ by 
\begin{align*}
\Psi_N:=k\sum_{x\in \frac1N\Lambda_N }N^{-\frac{d+2}{2}}\vr_{Nx}\delta_x.
\end{align*}
 One can show $\Psi_N$ is a distribution living in the negative Sobolev space $\mathcal H^{-s}(D)$ for all $s > d$.
To describe the limiting field, there are many equivalent ways to define the Gaussian free field $\Psi_D$ on a domain. One of them is to think of it as a collection of centered Gaussian variables $(\Psi_D, f)$ indexed by $C_c^\infty(D)$ with covariance structure given by 
$$\E[ (\Psi_D, f)(\Psi_D, g)]= \iint_{D\times D} f(x) g(y) G_D(x,y)\De x\De y,\quad f,\,g\in C_c^\infty(D)$$
where $G_D$ is the Green's function of the continuum Dirichlet problem with zero boundary conditions. We now state the main result for the finite volume.

\begin{theorem}[Scaling limit in $d\ge 2$ under finite volume]\label{thm:critical_d}
 $\Psi_N$ converges in distribution to the Gaussian free field $\Psi_D$ as $N\to \infty$ in the topology of $\mathcal H^{-s}(D)$ for $s>d$.
\end{theorem}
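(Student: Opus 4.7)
The plan is to deduce the distributional convergence $\Psi_N\to\Psi_D$ in $\mathcal{H}^{-s}(D)$ from two ingredients: convergence of covariances on smooth test functions (which yields convergence of finite-dimensional distributions, since both fields are centered Gaussian), together with tightness of the laws of $\Psi_N$ in $\mathcal{H}^{-s}(D)$. For $f,g\in C_c^\infty(D)$, unpacking the definition of $\Psi_N$ gives
\[
\E[(\Psi_N,f)(\Psi_N,g)] \;=\; k^2 N^{-(d+2)}\sum_{\tilde x,\tilde y\in\Lambda_N} G_{\Lambda_N}(\tilde x,\tilde y)\,f(\tilde x/N)\,g(\tilde y/N),
\]
and the target is that the right-hand side tends to $\iint_{D\times D} G_D(x,y)f(x)g(y)\,\De x\,\De y$.

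For this covariance limit I would follow the discrete-PDE approach of \cite{mm_scaling}. By \eqref{eq:cov:gen}, the function $u_N(\tilde x):=\sum_{\tilde y\in\Lambda_N}G_{\Lambda_N}(\tilde x,\tilde y)g(\tilde y/N)$ is the unique solution to $Ju_N=g(\cdot/N)$ on $\Lambda_N$ with zero data on $\partial_K\Lambda_N$, so the above covariance equals $k^2 N^{-(d+2)}\sum_{\tilde x}u_N(\tilde x)f(\tilde x/N)$. Since $\kappa_1=1$, the operator $J=-\Delta+\sum_{i=2}^K\kappa_i(-\Delta)^i$ has $-\Delta$ as its principal part. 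Letting $u$ solve the continuum Dirichlet problem $-\Delta u=g$ on $D$ with $u=0$ on $\partial D$ and setting $v_N(\tilde x):=2d\,N^2\,u(\tilde x/N)$, a Taylor expansion yields $Jv_N(\tilde x)=g(\tilde x/N)+\O{N^{-2}}$, because the $i$-th higher-order term contributes at order $N^{2-2i}$. Coercivity/energy estimates, combined with the smoothness of $u$ up to $\partial D$ inherited from the smoothness of $\partial D$, then give $u_N-v_N=\o{N^2}$ in a norm strong enough to control the averaging, and a Riemann sum approximation together with $k^2\cdot 2d=1$ produces the desired limit. The principal technical difficulty lies precisely here: the sets $\Lambda_N/N$ only approximate $D$ and there is a boundary-layer mismatch, which must be controlled by finite-difference elliptic theory of the kind developed in \cite{mm_scaling}. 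The present setting is however simpler because the leading symbol is $-\Delta$ rather than $\Delta^2$, and the higher-power terms in $J$ are perturbative.

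For tightness, the Rellich embedding $\mathcal{H}^{-s'}(D)\hookrightarrow \mathcal{H}^{-s}(D)$ is compact for $s'<s$, so it suffices to bound $\sup_N \E\|\Psi_N\|_{\mathcal{H}^{-s'}}^2$ for some $s'\in(d/2-1,\,s)$. Writing $(\phi_j,\lambda_j)$ for the Dirichlet eigenpairs of $-\Delta$ on $D$,
\[
\E\|\Psi_N\|_{\mathcal{H}^{-s'}}^2 \;=\; \sum_j \lambda_j^{-s'}\,\E[(\Psi_N,\phi_j)^2].
\]
Since all $\kappa_i\ge 0$, $J\ge -\Delta$ as positive self-adjoint operators on $\ell^2(\Lambda_N)$, so $J^{-1}\le(-\Delta)^{-1}$ and consequently
\[
\E[(\Psi_N,\phi_j)^2]\;\le\; k^2 N^{-(d+2)}\,\la \phi_j(\cdot/N),(-\Delta_{\Lambda_N})^{-1}\phi_j(\cdot/N)\ra.
\]
A direct discrete-to-continuum comparison of Dirichlet forms (exploiting the smoothness of $\phi_j$) bounds the right-hand side by $C\lambda_j^{-1}$ uniformly in $N$ and $j$; summing against $\lambda_j^{-s'}$ and invoking Weyl's law $\lambda_j\asymp j^{2/d}$ then yields the required uniform bound whenever $s'>d/2-1$, which is compatible with any $s>d$.
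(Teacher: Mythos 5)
Your first step---identifying the smeared covariance with the solution of a discrete elliptic problem whose leading part is the discrete Laplacian, comparing it with the continuum Dirichlet solution via finite-difference elliptic estimates, and finishing with a Riemann sum---is exactly the paper's route (Proposition~\ref{f1} together with the Thom\'ee-type error bound of Theorem~\ref{thm:one}); what you defer as ``coercivity/energy estimates'' controlling the boundary layer is precisely the content the paper supplies in Section~\ref{sec:Thomee}, so this half is the same approach with details omitted.

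The tightness argument, however, has a genuine gap. The inequality $J_{\Lambda_N}^{-1}\le(-\Delta_{\Lambda_N})^{-1}$ is fine (it is the same Brascamp--Lieb-type comparison the paper uses in $d=1$), but the next assertion --- that a ``direct discrete-to-continuum comparison of Dirichlet forms'' gives $\E[(\Psi_N,\phi_j)^2]\le C\lambda_j^{-1}$ \emph{uniformly in $N$ and $j$} --- is unjustified and is in fact false. Any such comparison carries constants involving higher derivatives of $\phi_j$, and $\|\phi_j\|_{C^k}$ grows polynomially in $\lambda_j$, so the comparison is only uniform in the regime $\lambda_j\ll N^2$; for $\lambda_j\gtrsim N^2$ aliasing destroys the bound. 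Concretely, in $d=1$ on $(0,1)$ one has $\phi_{2N-1}(m/N)=-\phi_1(m/N)$ for all grid points, hence $\E[(\Psi_N,\phi_{2N-1})^2]=\E[(\Psi_N,\phi_1)^2]\approx\lambda_1^{-1}$, which is of order one while $\lambda_{2N-1}^{-1}\to0$; the same phenomenon (the grid cannot distinguish a high-frequency eigenfunction from a low-frequency one) persists in $d\ge2$. A telling warning sign is that your claimed bound would yield tightness in $\mathcal H^{-s}(D)$ for every $s>d/2$, strictly stronger than the stated theorem. (A secondary point: for $s'\le d/2$ the field $\Psi_N$, a finite combination of Dirac masses, does not even lie in $\mathcal H^{-s'}(D)$, so the proposed range $s'\in(d/2-1,s)$ must anyway be restricted to $s'>d/2$.) The paper handles the high-frequency regime differently: it bounds $\E[(\Psi_N,u_j)^2]$ by the crude spectral estimate $k^2N^{-2}\nu_{\max}\|u_j\|^2_{\ell^2}$, uses Theorem~\ref{thm:spec_gap} (approximation of the spectral gap, again via \cite{thomee}) to control $N^{-2}\nu_{\max}$ by $\lambda_1^{-1}$, and the sup-norm bound $|u_j|\le\lambda_j^{d/4}$ of \cite{BV1999}, obtaining $\E[(\Psi_N,u_j)^2]\le C\lambda_j^{d/2}$; this is not decaying in $j$, which is exactly why the summability condition becomes $s>d$. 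To repair your proof you would either have to adopt this cruder uniform-in-$N$ bound for large $j$ (landing back on the paper's argument and its range $s>d$), or prove a genuinely new estimate on $\E[(\Psi_N,\phi_j)^2]$ valid across the aliasing regime, which your sketch does not provide.
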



A special case for finite volume measures is $d=1$ (Subsection~\ref{sec:d=1}). In this example, the GFF becomes a Brownian bridge, and the type of convergence we obtain is different from all other dimensions (convergence occurs in the space of continuous functions). In this case we consider the model on the ``blow up'' $\Lambda=\Lambda_N$ of an appropriate discretisation of $[0,\,1]$. We define a continuous interpolation $\psi_N$ of the rescaled interface and obtain the following theorem:
\begin{theorem}[Scaling limit in $d=1$]\label{thm:small_d}
 $\psi_N$ converges in distribution to the Brownian bridge on $[0,1]$ in the space $C[0,1]$.
\end{theorem}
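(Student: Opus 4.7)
The plan is the classical two-step approach to convergence in $C[0,1]$: establish (i) convergence of finite-dimensional distributions and (ii) tightness of the family $\{\psi_N\}_{N\ge 1}$. Since the lattice coordinates $\vr_x$ are jointly centered Gaussian and continuous interpolation preserves Gaussianity, each $\psi_N$ is itself a centered Gaussian process, so both steps reduce to quantitative control of the covariance, which in turn is encoded in the discrete Green function $G_{\Lambda_N}$ of $J=-\kappa_1\Delta+\kappa_2\Delta^{2}$ on $\Lambda_N$ with zero boundary data on $\partial_K\Lambda_N$.

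For (i), after unwinding the scaling $\psi_N(t)\approx k N^{-1/2}\vr_{Nt}$ it suffices to show that $k^2 N^{-1}G_{\Lambda_N}(\lfloor Ns\rfloor,\lfloor Nt\rfloor)$ converges as $N\to\infty$ to a constant multiple of the Brownian bridge kernel $s\wedge t - st$. I would exploit the fact that in $d=1$ the Dirichlet operator $-\Delta$ on the discrete interval has explicit sine eigenfunctions $u_k(x)=\sin(\pi k x/(N+1))$ with eigenvalues $\mu_k^{(N)}=2\sin^2(\pi k/(2(N+1)))$. Up to boundary-layer corrections near $\partial_K\Lambda_N$, the eigenpairs of $J$ are $(u_k,\,\kappa_1\mu_k^{(N)}+\kappa_2(\mu_k^{(N)})^{2})$; since $\mu_k^{(N)}\sim(\pi k/N)^2$ for $k\ll N$, the $\kappa_2(\mu_k^{(N)})^{2}\sim k^4/N^4$ contribution is of strictly lower order than the $\kappa_1\mu_k^{(N)}\sim k^2/N^2$ one on the relevant scaling window. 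Substituting into the spectral expansion of $G_{\Lambda_N}$ and rescaling reduces the problem to a Riemann sum converging to
\[
\frac{1}{\kappa_1}\sum_{k\ge 1}\frac{2}{(\pi k)^2}\sin(\pi k s)\sin(\pi k t)\;=\;\frac{1}{\kappa_1}(s\wedge t - st),
\]
the standard Fourier expansion of the Brownian bridge covariance up to the constant $1/\kappa_1$.

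For (ii), by Kolmogorov's continuity criterion and the Gaussian identity $\E[X^{2p}]=c_p\var(X)^{p}$, it is enough to produce a uniform-in-$N$ bound $\var(\psi_N(s)-\psi_N(t))\le C|s-t|$; taking $p=2$ then gives a $|s-t|^{2}$ bound on the fourth moment, which is more than enough. The increment variance is an explicit sum in $G_{\Lambda_N}$, and the spectral decomposition above lets one dominate it mode by mode by the purely gradient (DGFF) increment variance, where the bound $\asymp|s-t|$ is classical. A more robust alternative, in the spirit of \cite{mm_scaling} and of the proof of Theorem~\ref{thm:critical_d}, is to treat $G_{\Lambda_N}$ as a finite-difference approximation of the continuum operator $-\kappa_1\partial_x^2$ with a subleading biharmonic perturbation and to compare it directly to the one-dimensional Dirichlet Green function via the discrete maximum principle.

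The main obstacle is that $J$ is a fourth-order operator and so requires two-point boundary data on $\partial_K\Lambda_N$: the eigenfunctions of $J$ on $\Lambda_N$ are therefore not exactly the sine basis but differ from it by correctors localized near the endpoints, and the $\Delta^{2}$ term couples nontrivially with the $\Delta$ term in a boundary layer. One has to verify that these boundary-layer effects produce only an $o(1)$ contribution to $N^{-1}G_{\Lambda_N}(\lfloor Ns\rfloor,\lfloor Nt\rfloor)$ and to the increment variance, uniformly in $s,t\in[0,1]$; this is the point at which the discrete PDE techniques of \cite{mm_scaling}, adapted to the mixed operator, carry the argument.
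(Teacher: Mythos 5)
Your overall skeleton (Gaussianity, finite-dimensional convergence plus tightness, reduction of both to control of the rescaled Green's function, and an increment bound $\var(\psi_N(t)-\psi_N(s))\le C|t-s|$ obtained by domination by the purely gradient model) is the same as the paper's. The genuine gap is in the step you rely on for the covariance convergence: the discrete sine functions are \emph{not} eigenfunctions of $J=-\kappa_1\Delta+\kappa_2\Delta^2$ with the zero boundary data imposed on the two-layer boundary $\partial_K\Lambda_N$, because $\Delta^2$ with double-layer Dirichlet conditions is not the square of the single-layer Dirichlet Laplacian; so the assertion ``the eigenpairs are $(u_k,\kappa_1\mu_k^{(N)}+\kappa_2(\mu_k^{(N)})^2)$ up to boundary-layer correctors contributing $o(1)$ uniformly'' is precisely the hard quantitative statement, and you defer it rather than prove it. The same issue undermines the ``mode by mode'' domination you invoke for tightness. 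Moreover, your proposed fallback --- comparing $G_{\Lambda_N}$ to the continuum Green's function ``via the discrete maximum principle'' --- does not work here: fourth-order operators (and the mixed operator $-\Delta+\kappa\Delta^2$) do not obey a maximum principle, which is exactly why the paper avoids it.

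The paper fills these two gaps by different means. For tightness it does not need any spectral information: the Brascamp--Lieb inequality (Theorem~\ref{thm:BL}, applied with $A=-\tfrac12\Delta_\Lambda$ and the log-concave factor coming from the higher-order terms) gives directly $\E_{\Lambda_N}[(\vr_x-\vr_y)^2]\le \E^{DGFF}_{\Lambda_N}[(\vr_x-\vr_y)^2]$, and the right-hand side is computed explicitly through the conditioned random-walk (bridge) representation, yielding Lemma~\ref{lem:var_bound} and hence \eqref{eq:moment}; in the Gaussian setting you could equivalently use the quadratic-form ordering of the precision operators, but some such argument must replace your approximate diagonalization. For the finite-dimensional part, the paper proves convergence of the rescaled Green kernel not spectrally but through the Thom\'ee-type finite-difference error estimate (Theorem~\ref{thm:one}, fed into Proposition~\ref{f1}), which is an $L^2$/energy estimate using a truncated operator precisely to neutralize the boundary layer you worry about; this gives convergence of $\sum_{x,y}N^{-2}G^I_{1/N}(x,y)f(x)g(y)$ for test functions, and the pointwise (indeed uniform) convergence of $k^2N^{-1}G_{\Lambda_N}(\lfloor Ns\rfloor,\lfloor Nt\rfloor)$ to $s\wedge t-st$ is then extracted by a boundedness-plus-subsequence argument (the uniform bound again coming from Brascamp--Lieb), together with the observation that the interpolation term $\psi_{N,2}$ is negligible by \eqref{eq:grad_var}. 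Finally, note that in the paper's finite-volume setup $\kappa_1=1$ and the factor $k=1/\sqrt{2d}$ is built into $\psi_N$, so the limit is the standard Brownian bridge with no extra constant $1/\kappa_1$; your normalization would otherwise only give a constant multiple of the claimed limit.
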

As a by-product of this Theorem one can easily obtain the convergence of the discrete maximum in $d=1$.

\subsection{Idea of the proofs} We begin by explaining the idea behind the proof in the infinite volume case (Section~\ref{sec:big_d}).
Recalling~\eqref{eq:perseval}, given the appearance of the Fourier transforms in the limit we write the discrete Green's function in terms of the inverse Fourier transform. We see that a scaling factor appears in such a way the contribution from the $\Delta^2$ factor in the Hamiltonian vanishes, ensuring convergence to a purely gradient model. Tightness will be proven using the tightness criterion proven in~\citet{MourratNolen}. To conclude, a polarisation argument shows that the limit is uniquely identified by the finite-dimensional convergence result.

In the finite volume case we discuss here the idea of the proof for the $(\nabla+\Delta)$-model.
 We show first finite dimensional convergence and secondly tightness. Since the measures are Gaussian the finite dimensional convergence follows from the convergence of the covariance function. However, the behaviour of the covariance of the mixed model is not known explicitly in finite volume (for example, it lacks the classical random walk representation of Ginzburg-Landau models). So we use the expedient of PDE techniques in proving the convergence. The key fact which is used is that the Green's function satisfies the Dirichlet problem~\eqref{eq:cov}. We show that the discrete solution is equal to that of the continuum Dirichlet problem with a negligible error. This approximation is obtained from the interesting approach of~\cite{thomee}. His idea, adapted to our setting, is the following: if we write the operator $(-\Delta+\Delta^2)$ in the rescaled lattice $h\Z^d$ for $h$ small, then due to the scaling we end up dealing with $(-\Delta_h+h^2/(2d) \Delta^2_h)$. To quantify how negligible the presence of $\Delta^2_h$ is, we use some discrete Sobolev inequalities. While dealing with tightness we use a spectral gap argument. We use the fact that the smallest eigenvalue of the negative Laplacian is positive and one can approximate it by the scaled smallest eigenvalue of the discrete operator approximating the negative Laplacian. In Section~\ref{sec:Thomee} we therefore derive these precise estimates, in particular showing how derivatives of the test function appear in the constants. This Section is of independent interest, as it concerns the approximation of PDEs. We remark that our methodology seems to be robust enough to deal with different interface models whenever the interaction is given in terms of a discrete elliptic operator. 

\subsection{Outlook and open problems}
The mixed model gives rise to many interesting mathematical questions. Here we list down a few directions of research on this model.

\begin{enumerate}[leftmargin=*]
\item In \cite{borecki2010, CarBor} the Hamiltonian the authors considered was 
 $$H(\vr)=\sum_{x\in\Z^d}\left(\kappa_1 V_1(\nabla\vr_x)+\kappa_2 V_2(\Delta\vr_x)\right)$$
 where $V_1$ and $V_2$ were potentials with minimal assumptions. In general, it would be interesting to see if the scaling limit of such models under general convexity assumptions behaves in a similar manner to the Ginzburg-Landau models, in particular, if they still converge to the GFF.
\item If one considers the pinned versions of the purely gradient and purely Laplacian model, it is known in different settings that the field exhibits exponential decay of correlations \citep{BoltBrydges,IofVel,BCK}. Can one say the same for the mixed model? 
\item The extremes of the discrete Gaussian free field in $d=2$ are by now well-understood. It is known that the point process of extremes converges to a Cox-cluster process (an overview of the results on this topic is given in \citet{biskup2017extrema}). In $d\ge 3$ on the other hand extremal points behave similarly to the case of independent Gaussian variables (\cite{CCH2015}). We believe that a similar behaviour appears in the mixed model and we will address this issue in a future work.
\item It is known \citep{SS09} that SLE$_4$ arise as scaling limit of the level lines of the DGFF. That is, if one considers the continuous extension of the DGFF with appropriate boundary conditions on a grid approximation of a domain in the complex plane, then the zero-level line converges in distribution, as the grid size goes to 0, to SLE$_4$.  Given our results on the scaling limit in $d=2$ one may ask whether this convergence also holds true in the mixed model setting. 


\end{enumerate}
\paragraph{{\em Structure of the article}}We begin by showing Theorem~\ref{thm:big_d} in Section~\ref{sec:big_d}. The proof of Theorem~\ref{thm:critical_d} is given in Section~\ref{sec:small_d}. We include the one-dimensional Theorem~\ref{thm:small_d} in the section concerning finite volume measures, showing it in Subsection~\ref{sec:d=1}. The estimates on the discrete solution to the Laplacian problem are derived in Section~\ref{sec:Thomee}.

\paragraph{{\em Notation}}In the rest of the paper, $C$ denotes a generic constant that may change from line to line within the same equation.

\section{Infinite volume case}\label{sec:big_d}
In this section we prove Theorem \ref{thm:big_d}. We begin by giving the theoretical setup behind it and then pass to the actual proof.
\subsection{Setup}\label{subsec:setup_big_d}
For the reader's convenience we recall the basics on Besov-H\"older spaces and refer the reader to~\cite{FurlanMourrat} for more specific details. Let $U\subset \R^d$, $r\in \mathbb N\cup \{\infty\}$ and $C^r(U)$ denote the set of $r$ times continuously differentiable functions on $U$, and $C^r_c(U)$ the functions of the above space with compact support. For $f\in C^r(\R^d)$, denote
$$\|f\|_{C^r}:= \sum_{|\alpha|\le r} \|D^\alpha f\|_{L^\infty}.$$
The Besov-H\"older spaces are defined as follows. Let $\alpha<0$ and $r_0=-\lfloor \alpha \rfloor$ and
$$\mathcal B^{r_0}=\{\eta\in C^{r_0}: \|\eta\|_{C^{r_0}}\le 1 \text{ and } \mathrm{Supp}(\eta)\subset B(0,1)\}.$$ For $f\in C_c^\infty(\R^d)$ denote
$$\|f\|_{\mathcal C^\alpha}:=\sup_{\lambda\in (0,1]}\sup_{x\in \R^d}\sup_{\eta\in \mathcal B^{r_0}}\lambda^{-\alpha} \int_{\R^d}\lambda^{-d} f(y) \eta\left(\frac{y-x}{\lambda}\right)\De y.$$
The Besov--H\"older space $\mathcal C^\alpha_{loc}$, which henceforth we will abbreviate as $\mathcal C^\alpha$, is the completion of $C_c^\infty(\R^d)$ with respect to the norm $\|\cdot\|_{\mathcal C^\alpha}$. 
Let $\left(\vr_{x}\right)_{x\in\Z^{d}}$ be the infinite volume mixed
model in $d\ge 3.$ Let $f\in C_{c}^{\infty}(\R^{d})$ be a smooth
and compactly supported test function. Construct the field $\Psi_{N}$
as acting on test functions as follows:
\[
\left(\Psi_{N},f\right):=k N^{-\frac{d}{2}-1}\sum_{x\in\Z^{d}}f(N^{-1} x)\vr_x.
\]
In the following we will divide the proof of Theorem~\ref{thm:big_d} in two parts. In Subsection~\ref{subsec:proof_big_d} we will show the convergence of the marginal law of $\left(\Psi_{N},f\right)$ for any $f\in C_c^\infty(\R^d)$, in Subsection~\ref{subsec:Holderspace} we will show tightness. A polarization argument allows to deduce from this the limiting field as in~\citet[Remark~1.4]{MourratNolen}.

\subsection{Fluctuations of $(\Psi_N,\,f)$}\label{subsec:proof_big_d}
In this subsection, we will prove that as $N\to\infty$ for any smooth and compactly supported function $f$ one has
\begin{equation}\label{eq:conv_dist_char}
 (\Psi_N,\,f)\stackrel{d}{\to}\mathcal N\Big(0,\,\|(-\Delta)^{-1/2}f\|^2_{L^2(\R^d)}\Big).
\end{equation}
Given the Gaussian nature of the variables we consider, and the fact that they are centered, it suffices to show that for any such $f$
	\begin{equation}\E\left[\left( \Psi_N, f\right)^2\right]\rightarrow \|(-\Delta)^{-1/2}f\|^2_{L^2(\R^d)}.\label{eq:var_d3}\end{equation}
	We will begin with a preliminary lemma.
	\begin{lemma}\label{mu_bound}
  There exists a constant $C>0$ such that for all $\theta \in [-N\pi, N\pi]^d\setminus \{0\}$
  \begin{align*}
  N^{-2}\left( \frac{\|\theta\|^2}{2dN^2} + \frac{\|\theta\|^4}{4d^2N^4}\right)^{-1}\le N^{-2}\left(\mu\Big(\frac\theta{N}\Big)+\mu\Big(\frac\theta{N}\Big)^2\right)^{-1} \le \frac{2d}{\|\theta\|^2} + \frac{Cd}{2 N^2}.
  \end{align*}
  \end{lemma}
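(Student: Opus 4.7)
\textbf{Proof plan for Lemma \ref{mu_bound}.} Recall that $\mu(\psi) = \frac{1}{d}\sum_{i=1}^{d}(1-\cos\psi_{i})$ is the Fourier symbol of $-\Delta$, so that the symbol of $-\Delta+\Delta^{2}$ is $\mu+\mu^{2}$. The strategy is to bound $\mu(\theta/N)$ above and below by suitable functions of $\|\theta\|^{2}/N^{2}$ via two classical pointwise inequalities for $1-\cos(\cdot)$ on $[-\pi,\pi]$, and then invert.

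\emph{Left inequality.} Here I use the one-sided bound $1-\cos(x)\le x^{2}/2$, valid for all $x\in\mathbb{R}$. Applying it coordinatewise with $x=\theta_{i}/N$ gives $\mu(\theta/N)\le \|\theta\|^{2}/(2dN^{2})$, and squaring,
\[
\mu(\theta/N)+\mu(\theta/N)^{2}\;\le\;\frac{\|\theta\|^{2}}{2dN^{2}}+\frac{\|\theta\|^{4}}{4d^{2}N^{4}}.
\]
Taking reciprocals reverses the inequality, and multiplying by $N^{-2}$ yields the left-hand bound of the lemma.

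\emph{Right inequality.} The factorisation $\mu+\mu^{2}=\mu(1+\mu)$ and the fact that $\mu\ge 0$ give $(\mu+\mu^{2})^{-1}\le \mu^{-1}$, so it suffices to prove
\[
\frac{1}{N^{2}\mu(\theta/N)}\;\le\;\frac{2d}{\|\theta\|^{2}}+\frac{C\,d}{2N^{2}}
\]
for a universal constant $C$. Equivalently, I need a lower bound of the form
\begin{equation}\label{eq:muLB}
\mu(\psi)\;\ge\;\frac{\|\psi\|^{2}}{2d+A\|\psi\|^{2}},\qquad \psi\in[-\pi,\pi]^{d}\setminus\{0\},
\end{equation}
for some $A>0$ depending only on $d$. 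To obtain \eqref{eq:muLB}, I would compare $\mu(\psi)$ with its quadratic Taylor surrogate: by the alternating-series estimate applied to the Taylor series of $\cos$, one has $x^{2}-2(1-\cos x)\le x^{4}/12$ for all $|x|\le\pi$ (the series terms $x^{2k}/(2k)!$ being monotone decreasing as soon as $x^{2}\le 30$). Summing over coordinates gives $\|\psi\|^{2}-2d\,\mu(\psi)\le \|\psi\|^{4}/12$, which, together with the elementary lower bound $1-\cos x \ge 2x^{2}/\pi^{2}$ on $[-\pi,\pi]$ (hence $\mu(\psi)\ge 2\|\psi\|^{2}/(d\pi^{2})$), yields \eqref{eq:muLB} after rearrangement, with $A$ of order $d$. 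Substituting $\psi=\theta/N$ and rearranging delivers the displayed bound on $(N^{2}\mu)^{-1}$, and thus the right-hand inequality of the lemma.

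\emph{Where the care lies.} The only non-routine point is the lower bound \eqref{eq:muLB}, because one needs a single estimate that is uniform over the whole Brillouin zone $[-\pi,\pi]^{d}$: a naive Taylor expansion is only good for $\|\psi\|$ small, while $1-\cos x \ge 2x^{2}/\pi^{2}$ loses the sharp constant $2d$ that appears in the limiting symbol. The additive term $A\|\psi\|^{2}$ in the denominator of \eqref{eq:muLB} is precisely what absorbs the higher-order Taylor remainder, and is what produces the $O(1/N^{2})$ correction in the statement. Everything else is bookkeeping.
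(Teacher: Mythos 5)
Your proof is correct, and it arrives at the same two bounds on $\mu(\theta/N)$ that drive the argument, but by a different route for the key lower bound. The upper bound $\mu(\theta/N)\le \|\theta\|^2/(2dN^2)$ and the left-hand inequality are obtained exactly as in the paper. For the lower bound, however, the paper does not prove anything from scratch: it imports \citet[Lemma~7]{CHR}, a ready-made estimate $N^{-4}\bigl(\sum_i\sin^2(w_i/N)\bigr)^{-2}\le\bigl(\|w\|^{-2}+CN^{-2}\bigr)^2$, which via $1-\cos x=2\sin^2(x/2)$ yields $\mu(\theta/N)\ge\bigl(\tfrac{2dN^2}{\|\theta\|^2}+\tfrac{Cd}{2}\bigr)^{-1}$, and then inverts, keeping both $\mu$ and $\mu^2$ and discarding the square term at the end; your shortcut $(\mu+\mu^2)^{-1}\le\mu^{-1}$ is equivalent and slightly cleaner. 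You instead derive the same lower bound directly: the alternating-series estimate $x^2-2(1-\cos x)\le x^4/12$ on $[-\pi,\pi]$ gives $2d\,\mu(\psi)\ge\|\psi\|^2-\|\psi\|^4/12$, and the Jordan-type bound $1-\cos x\ge 2x^2/\pi^2$ gives $\mu(\psi)\ge 2\|\psi\|^2/(d\pi^2)$; the ``rearrangement'' you gesture at does close, since $\mu(2d+A\|\psi\|^2)\ge\|\psi\|^2$ reduces to $A\mu\ge\|\psi\|^2/12$, which holds once $A\ge d\pi^2/24$, so your exhibit of $\mu(\psi)\ge\|\psi\|^2/(2d+A\|\psi\|^2)$ is precisely the content of the cited lemma after the substitution $\psi=\theta/N$. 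What the paper's route buys is brevity; what yours buys is a self-contained elementary proof with an explicit constant (your $A$ gives $C=\pi^2/12$ in the statement of the lemma).
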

  \begin{proof}\label{pg:proof_mu_bound}
  We know from \citet[Lemma~7]{CHR} that
	there exists $C>0$ such that for all $N\in \N$ and $w\in [-N\pi/2, N\pi/2]^d\setminus \{0\}$
	\begin{align}
	\frac{1}{\| w\|^4}\le N^{-4}\left(\sum_{i=1}^d \sin^2\left(\frac{w_i}{N}\right)\right)^{-2}\le \left(\frac{1}{\|w\|^2}+\frac{C}{N^{2}}\right)^2.
	\end{align}
	Therefore $$\left(\frac{2dN^2}{\|\theta\|^2} + \frac{Cd}2\right)^{-1}\le \mu\Big(\frac\theta{N}\Big)\le \frac{\|\theta\|^2}{2dN^2} $$
	and hence
	\begin{align*}
	N^{-2}\left( \frac{\|\theta\|^2}{2dN^2} + \frac{\|\theta\|^4}{4d^2N^4}\right)^{-1}&\le N^{-2}\left(\mu\Big(\frac\theta{N}\Big)+\mu\Big(\frac\theta{N}\Big)^2\right)^{-1}\\
	 & \le N^{-2}\left(\left(\frac{2dN^2}{\|\theta\|^2} + \frac{Cd}2\right)^{-1}+\left(\frac{2dN^2}{\|\theta\|^2} + \frac{Cd}2\right)^{-2}\right)^{-1}\\
	& \le \frac{2d}{\|\theta\|^2} + \frac{Cd}{2 N^2}.\qedhere
	\end{align*}
  \end{proof}
  We can now begin with the proof of~\eqref{eq:var_d3}.
By definition of the field and translation invariance we have that
	\begin{align}
	\E\left[ \left( \Psi_N,  f\right)^2 \right]&=k^2N^{-(d+2)} \sum_{x,y\in \frac1N\Z^d} \E[ \vr_{Nx}\vr_{Ny}] f(x) f(y) \nonumber\\
    &=k^2N^{-(d+2)} \sum_{x,y\in \frac1N \Z^d} G(0, N(y-x))f(x) f(y).\label{eq:green1}
   \end{align}
Now our goal is to shift these expression to Fourier coordinates. We deduce from the Fourier inversion formula, in the same fashion of \citet[Lemmas~1.2.2,~1.2.3]{Kurt_thesis}, that
\begin{equation}\label{eq:green:inversion}
G(0,x)= \frac1{(2\pi)^d} \int_{[-\pi, \pi]^d} \left(\mu(\theta)+\mu(\theta)^2\right)^{-1}\e^{-\iota\la x, \theta\ra} \De \theta
\end{equation}	
where $\mu(\theta)=\frac1{d}\sum_{i=1}^d (1-\cos(\theta_i))$. 
 Returning to the expression~\eqref{eq:green1} and plugging in~\eqref{eq:green:inversion} we have
	\begin{align}
	\E&\left[ \left( \Psi_N, f\right)^2 \right]\nonumber\\
	&=\frac{ k^2N^{-(d+2)}}{(2\pi)^d} \sum_{x,y\in  \frac1N \Z^d}\int_{[-\pi, \pi]^d} \left(\mu(\theta)+\mu(\theta)^2\right)^{-1} \e^{-\iota\la N(y-x), \theta\ra} f(x) f(y) \De\theta\nonumber\\
	&=\frac{ k^2N^{-2}}{(2\pi)^d}\int_{[-N\pi, N\pi]^d}  \left(\mu\left(\frac\theta{N}\right)+\mu\left(\frac\theta{N}\right)^2\right)^{-1} \left|N^{-d} \sum_{x\in  \frac1N \Z^d} \e^{-\iota\la x, \theta \ra} f(x)\right|^2\De \theta.\nonumber\\
	\end{align}
Here we exchange sum and integral due to Lemma~\ref{mu_bound}. Then we notice that
  \begin{align}
	\lim_{N\to+\infty} \int_{[-N\pi, N\pi]^d} &\Bigg[ N^{-2} \Bigg(\mu\bigg(\frac\theta{N}\bigg)+\mu\bigg(\frac\theta{N}\bigg)^2\Bigg)^{-1} -2d\|\theta\|^{-2}\Bigg]\times\nonumber\\
	\times&\Bigg|(2\pi)^{-d/2}N^{-d} \sum_{x\in  \frac1N \Z^d} \e^{-\iota\la x, \theta \ra} f(x)\Bigg|^2\De \theta=0.\label{claim:error1}
 \end{align}
 In fact, by Lemma \ref{mu_bound} we can sandwich the expression in~\eqref{claim:error1} between two infinitesimal quantities. The lower bound is given by
\begin{equation}
\int_{[-N\pi, N\pi]^d} \left[ N^{-2}\left( \frac{\|\theta\|^2}{2dN^2} + \frac{\|\theta\|^4}{4d^2N^4}\right)^{-1} -2d\|\theta\|^{-2}\right] \left|(2\pi)^{-d/2}N^{-d} \sum_{x\in  \frac1N \Z^d} \e^{-\iota\la x, \theta \ra} f(x)\right|^2\De \theta \label{eq:lower}
\end{equation}
and the upper bound is given by
\begin{equation}
\int_{[-N\pi, N\pi]^d} \frac{Cd}{2 N^2} \left|(2\pi)^{-d/2}N^{-d} \sum_{x\in  \frac1N \Z^d} \e^{-\iota\la x, \theta \ra} f(x)\right|^2\De \theta. \label{eq:upper}
\end{equation}

We show that both the limit of~\eqref{eq:lower} and~\eqref{eq:upper} are zero as $N\to\infty$. Using Lemma 4.7 of \cite{mm_scaling} we have that for any $N$ and $s>d$ 
\begin{equation}\label{eq:decayRS}
\left|(2\pi)^{-d/2} N^{-d}\sum_{x\in \Z^d} \e^{-\iota \la \frac{x}{N}, \theta\ra} f\left(\frac{x}{N}\right)-\widehat f(\theta)\right|\le CN^{-s}.
\end{equation}

Using~\eqref{eq:decayRS} it follows that~\eqref{eq:upper} converges to zero. For~\eqref{eq:lower} observe that the integrand goes to zero and we can apply the dominated convergence theorem due to the following integrable bound:
\begin{align*}
&\Bigg|\left[ N^{-2}\left( \frac{\|\theta\|^2}{2dN^2} + \frac{\|\theta\|^4}{4d^2N^4}\right)^{-1} -2d\|\theta\|^{-2}\right] \Bigg|\Bigg|(2\pi)^{-d/2}N^{-d} \sum_{x\in  \frac1N \Z^d} \e^{-\iota\la x, \theta \ra} f(x)\Bigg|^2\\
&\le \Bigg|\Bigg[ N^{-2}\left( \frac{\|\theta\|^2}{2dN^2} + \frac{\|\theta\|^4}{4d^2N^4}\right)^{-1} -2d\|\theta\|^{-2}\Bigg] 2\left(CN^{-2s}+|\widehat f(\theta)|^2\right)\Bigg|\\
&\le \frac{8d}{\|\theta\|^2}\left(CN^{-2s}+|\widehat f(\theta)|^2\right).\end{align*}
This shows~\eqref{claim:error1}.
Next the following convergence holds from the estimates~\eqref{eq:decayRS} and~\eqref{eq:perseval}:

	\begin{equation}\label{claim:RS}\lim_{N\to+\infty}\frac1{(2\pi)^{d}}\int_{[-N\pi, N\pi]^d}\|\theta\|^{-2}\Bigg|N^{-d} \sum_{x\in  \frac1N \Z^d} \e^{-\iota\la x, \theta \ra} f(x)\Bigg|^2\De \theta=\|(-\Delta)^{-1/2}f\|^2_{L^2(\R^d)}.
	\end{equation}
	Thus for all $f\in C_c^\infty(\R^d)$ the convergence in distribution ~\eqref{eq:conv_dist_char} follows. 
\subsection{Tightness in Besov--H\"older spaces}\label{subsec:Holderspace}We now state the criterion we will employ to show tightness in Besov--H\"older spaces.
\begin{proposition}[{\citet[Proposition~3.1]{MourratNolen}}]
Let $f\in C_{c}^\infty(\R^{d})$ and  $f_{\lambda}(x):=\lambda^{-d}f(\lambda/x).$ Let $(\Psi_N)_{N\in\N}$ be a sequence of stationary random distributions.
Assume that for all $p\ge1$ there exists $C=C(p,\,f)$ such that for all
$N,\,\lambda\in(0,\,1]$ one has
\[
E\left[\left|\left(\Psi_{N},f_{\lambda}\right)\right|^{p}\right]^{\frac{1}{p}}\le C\lambda^{-d}.
\]
Then $\Psi_{N}$ is tight in $\mathcal C^\alpha,$ $\alpha<-d$.
\end{proposition}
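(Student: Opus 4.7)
The plan is to upgrade the given pointwise moment estimate to a uniform-in-$N$ moment bound on $\|\Psi_N\|_{\mathcal{C}^{\alpha'}(B(0,R))}$ for some intermediate exponent $\alpha<\alpha'<-d$ and every ball $B(0,R)$, and then invoke the compact embedding $\mathcal{C}^{\alpha'}_{\mathrm{loc}}\hookrightarrow \mathcal{C}^{\alpha}_{\mathrm{loc}}$ of Besov--H\"older spaces. Since tightness of a random distribution is a local matter, it suffices to fix one such $R$ and one such $\alpha'$ and to work with test functions supported in $B(0,R)$.

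The first step is to replace the three continuous suprema in the definition of $\|\cdot\|_{\mathcal{C}^{\alpha'}}$ by a discrete counterpart. Let $\lambda_n=2^{-n}$, let $x_{n,k}$ range over a $\lambda_n$-net of $B(0,R+1)$ (giving at most $C\lambda_n^{-d}$ centers), and let $\mathcal{D}\subset \mathcal{B}^{r_0}$ be a countable family which is dense in the $C^{r_0-1}$-topology; such a $\mathcal{D}$ exists because $\mathcal{B}^{r_0}$ is precompact in $C^{r_0-1}$ by Arzel\`a--Ascoli. A short computation that uses the $r_0$ derivatives controlled in the definition of $\mathcal{B}^{r_0}$ shows that the continuous sup over triples $(\lambda,x,\eta)$ and the dyadic sup over $(\lambda_n,x_{n,k},\eta\in \mathcal{D})$ differ by at most a deterministic multiplicative constant.

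For each triple in the dyadic family the hypothesis gives
\[
\E\bigl[|(\Psi_N,\eta_{\lambda_n}^{x_{n,k}})|^p\bigr]^{1/p}\le C\lambda_n^{-d},
\]
so after multiplying by the scaling weight $\lambda_n^{-\alpha'}$, applying Markov's inequality with a large exponent $p$, and taking a union bound over the at most $C\lambda_n^{-d}$ centers at scale $n$ and the (finitely many) elements of a truncated $\mathcal{D}$, the probability that the rescaled test pairing at scale $n$ exceeds a level $M$ is dominated by $CM^{-p}\lambda_n^{-d-p(\alpha'+d)}$. Summing this geometric series in $n$ converges exactly because $\alpha'+d<0$, provided $p$ is chosen so that $p(-\alpha'-d)>d$. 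This yields $\sup_{N}\E[\|\Psi_N\|_{\mathcal{C}^{\alpha'}(B(0,R))}^p]<\infty$, and compactness of the embedding then delivers tightness of $(\Psi_N)$ in $\mathcal{C}^{\alpha}$.

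The main obstacle is the discretization step: the set $\mathcal{B}^{r_0}$ is uncountable, so a naive union bound over all admissible $\eta$ is unavailable, and the hypothesis gives no quantitative handle on the joint law of $(\Psi_N,\eta_\lambda^x)$ as $\eta,\lambda,x$ vary simultaneously. The workaround is to exploit the $r_0$ bounded derivatives of admissible $\eta$ in a Taylor-approximation argument, absorbing the error between a general $(\lambda,x,\eta)$ and its dyadic surrogate into the deterministic constant in front of the moment estimate; this is the step in which the regularity parameter $r_0$ genuinely enters. Once it is carried out, the remainder of the proof is a standard Kolmogorov--Chentsov chaining.
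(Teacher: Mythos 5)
The crux of this proposition (which the paper does not prove but imports from Mourrat--Nolen, where the argument runs through a wavelet/multiresolution decomposition) is precisely the step you treat as a technical afterthought: passing from the hypothesis, which controls $(\Psi_N,f_\lambda)$ only for dilates of the \emph{given} test function $f$ (translates being free by stationarity), to a bound uniform over the uncountable class $\mathcal B^{r_0}$ appearing in the definition of $\|\cdot\|_{\mathcal C^{\alpha'}}$. Your workaround --- approximate $\eta\in\mathcal B^{r_0}$ by a countable (then finite) family $\mathcal D$ and ``absorb the error into the deterministic constant'' --- does not close. First, the hypothesis gives a constant $C(p,\eta)$ separately for each test function, with no uniformity in $\eta$ (it is not assumed to be controlled by, say, $\|\eta\|_{C^{r_0}}$), so even the union bound over a countable $\mathcal D$ is unjustified: $\sup_{\eta\in\mathcal D}C(p,\eta)$ could be infinite. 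Second, and more fundamentally, the discrepancy you must control is $(\Psi_N,(\eta-\eta')^x_\lambda)$, a pairing of the \emph{random} distribution $\Psi_N$ with a small test function; no Taylor expansion of $\eta$ produces a deterministic bound for it. Estimating such pairings in terms of $\|\eta-\eta'\|_{C^{r_0-1}}$ requires knowing a priori that $\Psi_N$ is a distribution of controlled order with quantitative bounds --- which is essentially the conclusion you are trying to prove, so the argument is circular. The same objection applies to the chaining in the $(\lambda,x)$ variables once $\eta$ is not one of the finitely many functions for which the hypothesis is invoked.

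The mechanism that actually bridges this gap in the cited proof is a multiresolution (wavelet) expansion: any $\eta\in\mathcal B^{r_0}$ is written as a sum of translates and dilates of finitely many fixed compactly supported generators, with coefficients bounded solely in terms of $\|\eta\|_{C^{r_0}}$, so that the moment hypothesis (applied to those fixed generators, with a single constant) plus stationarity controls every term, and the coefficient decay makes the series summable at the required rate $\lambda^{-\alpha'}$. Your dyadic-scales/net-of-centers bookkeeping, the choice of $p$ with $p(-\alpha'-d)>d$, and the compact embedding $\mathcal C^{\alpha'}\hookrightarrow\mathcal C^{\alpha}$ are all fine and indeed standard, but they only become available after this reduction to a single (or finitely many) test function(s) has been carried out; as written, your proof assumes away the one step that constitutes the content of the proposition. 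A rescue along your lines would require strengthening the hypothesis to a moment bound uniform over the unit ball of $C^{r_0}$ functions supported in $B(0,1)$, which is strictly stronger than what is assumed here and than what the paper verifies in Section 3.
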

Given the previous Proposition, we can now begin to show tightness.

We start with the variance, that is, with $p=2$. We have
\begin{align}
E\left[\left|\left(\Psi_{N},f_{\lambda}\right)\right|^{2}\right] & =\kappa^2 N^{-(d+2)}\sum_{x,\,y\in\Z^{d}}f_{\lambda}(N^{-1} x)f_{\lambda}(N^{-1} y)E\left[\vr_x\vr_y\right]\nonumber \\
 & =\kappa^2 N^{-(d+2)}\sum_{x,\,y\in\Z^{d}}f_{\lambda}(N^{-1} x)f_{\lambda}(N^{-1} y)G(0,\,x-y)\label{eq:var}
\end{align}
by translation invariance of the field. Note that in swapping expectation
and limit we are using the fact that $f(\cdot)$ has compact support.
Plugging \eqref{eq:green:inversion} in \eqref{eq:var} we see that the right-hand
side of \eqref{eq:var} equals, after a change of variables,
\begin{align}
&\frac{N^{-2}}{(2\pi)^{d}}  \int_{\left[-{N\pi},\,N\pi\right]^{d}}N^{-2d}\sum_{x,\,y\in N^{-1}\Z^{d}}f_{\lambda}(x)f_{\lambda}(y)\frac{\e^{-i\langle x-y,\,\theta\rangle}}{\mu(N^{-1}\theta)+\mu(N^{-1}\theta)^{2}}\De\theta\\
&=\frac{N^{-2}}{(2\pi)^{d}}\int_{\left[-N\pi,\,N\pi\right]^{d}}\frac{1}{\mu(N^{-1}\theta)+\mu(N^{-1}\theta)^{2}}\left|N^{-d}\sum_{x\in  N^{-1}\Z^{d}}f_{\lambda}(x)\e^{-i\langle x,\,\theta\rangle}\right|^{2}\De\theta\nonumber .
\end{align}
By Lemma~\ref{mu_bound} we obtain a further upper bound by 
\begin{equation}
\frac{1}{(2\pi)^{d}}\int_{\left[-N\pi,\,N\pi\right]^{d}}\Big(\frac{2d}{\|\theta\|^{2}}+\frac{Cd}{2N^2}\Big)\left|\left(N\lambda\right)^{-d}\sum_{x\in N^{-1} \Z^{d}}f(x/\lambda)\e^{-i\langle x,\theta\rangle}\right|^{2}\De\theta.\label{eq:upbd}
\end{equation}
 Using the Poisson summation formula as in~\citet[Lemma 4.7]{mm_scaling} (the result there is stated for Schwartz functions but can be quickly extended to smooth compactly supported functions) the following estimate follows. Let $s>d$. Then
\[
\left|(2\pi)^{-d/2}\left(N\lambda\right)^{-d}\sum_{x\in N^{-1} \Z^{d}}f(x/\lambda)\e^{-i\langle x,\,\theta\rangle}-\widehat{f}(\lambda\theta)\right|\le C\left(N\lambda\right)^{-s}
\]

We thus obtain a
further upper bound of \eqref{eq:upbd} as

\begin{align}
&\int_{\left[-N\pi,\,N\pi\right]^{d}}\Big(\frac{2d}{\|\theta\|^{2}}+\frac{Cd}{2N^2}\Big)\left(\left|\widehat{f}(\lambda\theta)\right|+C\left(N\lambda\right)^{-s}\right)^2\De\theta\nonumber\\
&\le \int_{\left[-N\pi,\,N\pi\right]^{d}}\Big(\frac{2d}{\|\theta\|^{2}}+\frac{Cd}{2N^2}\Big)\left(2\left|\widehat{f}(\lambda\theta)\right|^2+C\left(N\lambda\right)^{-2s}\right)\De\theta.\label{eq:red_eq}
\end{align}
 We have from~\eqref{eq:red_eq}  
two summands: the first by the change of variables $\theta':=\theta/\lambda$ being
\begin{align*}
 & \int_{\left[-N\pi,\,N\pi\right]^{d}}\Big(\frac{2d}{\|\theta\|^{2}}+\frac{Cd}{2N^2}\Big)\left|\widehat{f}(\lambda\theta)\right|^2\De\theta\\
 &\le  C\lambda^{2-d}\int_{\R^d}\frac{1}{\|\theta\|^{2}}\left|\widehat{f}(\theta)\right|^2\De\theta + CN^{-2}\lambda^{-d}\int_{\R^d}\left|\widehat{f}(\theta)\right|^2\De\theta\\
 & \le C \big(\lambda^{2-d} + \lambda^{-d}\big) \le C\lambda^{-d},
\end{align*}
since $\left\Vert \theta\right\Vert ^{-2}$ is integrable at $0$
in $d\ge3$ and $\widehat f(\cdot)$ decays faster than any polynomial at infinity;
the second being
\begin{align*}
{C\left(N\lambda\right)^{-2s}} & \int_{\left[-N\pi,\,N\pi\right]^{d}}\Big(\frac{2d}{\|\theta\|^{2}}+\frac{Cd}{2N^2}\Big)\De\theta
\le C N^{-2s} N^{d-2}\lambda^{-2s}\le C \lambda^{-2s}
\end{align*}
for $s>(d-2)/2$.
Therefore we have
\[
E\left[\left|\left(\Psi_{N},f_{\lambda}\right)\right|^{2}\right]\le C\lambda^{-d}\vee \lambda^{-2s}.
\]
Since we need that $s>d$, we have
\[
E\left[\left|\left(\Psi_{N},f_{\lambda}\right)\right|^{2}\right]^{\frac{1}{2}}\le C\lambda^{-d}.
\]
Since $\Psi_{N}$ is Gaussian, one can see that for all $p=2m,\,m\in\N,$
one has
\[
E\left[\left|\left(\Psi_{N},f_{\lambda}\right)\right|^{p}\right]^{\frac{1}{p}}\le CE\left[\left|\left(\Psi_{N},f_{\lambda}\right)\right|^{2}\right]^{\frac{m}{2m}} \le C\lambda^{-d}
\]
from which the result follows by extending the bound to any $p\ge1$
with H\"older's inequality. Hence tightness follows.

\begin{remark}The previous result can be adapted to prove the convergence of $\Psi_N$ to $\Psi$ in the space $\mathcal S^*(\R^d)$, the dual of the space of Schwartz functions. By \citet[Corollary~2.4]{BOY2017} the convergence of the characteristic function is sufficient to determine the limiting field. In the case when $f\in\mathcal S$, the proof of~\eqref{eq:var_d3} can be carried out in exactly the same way as for $C_c^\infty$ test functions, thanks to the rapid decay of the Fourier transform of Schwartz functions and the Poisson summation formula. We omit the details and refer the readers to \cite{mm_scaling} where this set-up was used for infinite volume membrane model.\end{remark}

%

\section{Finite volume case}\label{sec:small_d}
\subsection{Setup}\label{subsec:preliminary}
In this Section we will consider in details the finite volume limit of interfaces with Hamiltonian~\eqref{gen:ham} in the case $\kappa_1=1,\,\kappa_i\ge 0$ for $i=2,\,\ldots,\,K-1$ and $\kappa_K>0$. We will now show the finite dimensional convergence.

 Let $D$ be any bounded domain in $\R^d$ with smooth boundary.  Let $D_N$ and $\Lambda_N$ be as defined in Subsection~\ref{subsec:finite}. The key result of this Subsection is to show that the variance of $(\Psi_N,\,f)$ converges to that of $(\Psi_D,\,f)$, that is, to the norm of the solution of a suitable Dirichlet problem.
\begin{remark}
The reduction from smooth boundary to piece-wise smooth boundaries can perhaps be achieved but we will not aim for such a generalization in this article. 
\end{remark}
\begin{proposition}\label{f1}
Let $f$ be a smooth and compactly supported function on $D$ and consider 
$$(\Psi_N,\,f)=k\sum_{x\in \frac1N\Lambda_N }N^{-\frac{d+2}{2}}\vr_{Nx}f(x).$$ Then 
$$\lim_{N\to\infty}\var[(\Psi_N,\,f)] = \int_D u(x)f(x)\De x ,$$ where $u$ is the solution of the Dirichlet problem
\begin{equation}
\begin{cases}
-\Delta_c u(x) = f(x) & x\in D\\
u(x)=0 & x\in\partial D 
\end{cases}\label{eq2:continuum}
\end{equation}
and $\Delta_c$ is the Laplace operator defined by $\Delta_c=\sum_{i=1}^d\frac{\partial^2}{\partial x^2_i}$.
\end{proposition}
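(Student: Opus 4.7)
The plan is to recast the variance as a discrete Dirichlet problem, to pass to the continuum via the Thomée-type PDE approximation developed in Section~\ref{sec:Thomee}, and to close up by a Riemann-sum argument. By the defining property \eqref{eq:cov:gen} of $G_{\Lambda_N}$, the variance rewrites as
\[
\var[(\Psi_N,f)] = k^2 N^{-(d+2)} \sum_{x \in \Lambda_N} v_N(x)\, f(x/N),
\]
where $v_N:\Lambda_N\to\R$ is the unique solution of $J v_N(x) = f(x/N)$ on $\Lambda_N$, subject to $v_N \equiv 0$ on $\partial_K\Lambda_N$.

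Next I would rescale to lattice spacing $h := 1/N$. For $z \in \tfrac{1}{N}\Lambda_N$ set $\hat v_N(z) := v_N(Nz)$ and $w_N(z) := \hat v_N(z)/(2dN^2)$. Using the exact identity
\[
(-\Delta)^i v_N(Nz) \;=\; (h^2/(2d))^i (-\Delta_h)^i \hat v_N(z),
\]
where $\Delta_h g(z) := h^{-2}\sum_{j=1}^d (g(z+he_j)+g(z-he_j)-2g(z))$ is the rescaled discrete Laplacian (converging pointwise on smooth functions to $\Delta_c$), the equation $J v_N = f(\cdot/N)$ becomes the singularly perturbed discrete Dirichlet problem
\[
(-\Delta_h) w_N(z) + \sum_{i=2}^K \frac{\kappa_i\, h^{2i-2}}{(2d)^{i-1}}\, (-\Delta_h)^i w_N(z) \;=\; f(z), \quad z\in \tfrac{1}{N}\Lambda_N,
\]
with $w_N \equiv 0$ on $\tfrac{1}{N}\partial_K\Lambda_N$. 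Since $D$ has smooth boundary, the continuum solution $u$ of \eqref{eq2:continuum} belongs to $C^\infty(\overline D)$ by elliptic regularity and is the natural candidate limit of $w_N$. I would then invoke the Thomée-type comparison of Section~\ref{sec:Thomee} to establish a uniform bound
\[
\max_{z \in \tfrac{1}{N}\Lambda_N}\, \bigl|w_N(z) - u(z)\bigr| \;\xrightarrow[N\to\infty]{}\; 0.
\]

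Plugging $v_N(x) = 2dN^2 w_N(x/N)$ back into the variance formula and using $k^2 = 1/(2d)$ then gives
\[
\var[(\Psi_N,f)] \;=\; N^{-d}\sum_{z \in \tfrac{1}{N}\Lambda_N} w_N(z)\, f(z) \;=\; N^{-d}\sum_{z \in \tfrac{1}{N}\Lambda_N} u(z)\, f(z) + o(1),
\]
and the right-hand side converges to $\int_D u(x)f(x)\De x$ by a Riemann-sum argument, since $uf$ is continuous with compact support in $\overline D$.

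The main obstacle is the uniform bound $\max|w_N - u|\to 0$. Each perturbation $(-\Delta_h)^i$, $i\ge 2$, carries a vanishing factor $h^{2i-2}$, but making this rigorous requires $h$-independent bounds on their action on regular enough functions, and the discrete boundary $\partial_K\Lambda_N$ only approximates the smooth $\partial D$ up to errors of order $h$. Both difficulties would be handled by the discrete Sobolev-type inequalities and discrete elliptic estimates of Section~\ref{sec:Thomee}, whose constants depend on finitely many derivatives of $u$, and hence (by elliptic regularity) on derivatives of $f$ and on the geometry of $\partial D$. The comparison is then typically carried out by expressing $w_N - \tilde u_N$, with $\tilde u_N$ a suitable discrete interpolant of $u$, as the solution of a perturbed discrete equation driven by the consistency error, and bounding it via energy estimates or a discrete maximum principle.
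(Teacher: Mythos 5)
Your proposal is correct and follows essentially the same route as the paper: after rewriting the variance through the Green's function, your rescaled function $w_N$ coincides exactly with the paper's $H_N(x)=k^2N^{-2}\sum_{y\in\frac1N\Lambda_N}G_{\frac1N}(x,y)f(y)$, which solves the finite-difference Dirichlet problem for $L_h$ with $h=1/N$, and the limit is then extracted via the Thom\'ee-type approximation of Section~\ref{sec:Thomee} together with a Riemann-sum argument. The only caveat concerns the step you flag as the main obstacle: the sup-norm bound $\max_z|w_N(z)-u(z)|\to 0$ is neither what Section~\ref{sec:Thomee} provides nor what is needed. Theorem~\ref{thm:one} gives the discrete $L^2$ estimate $N^{-d}\sum_{z\in\frac1N\Lambda_N}e_N(z)^2\le CN^{-1}$ with $e_N=H_N-u$ (using that $u\in C^\infty(\overline D)$ by elliptic regularity, so $M_1,M_3<\infty$), and this already suffices, since by Cauchy--Schwarz
\[
\Big|N^{-d}\sum_{z\in\frac1N\Lambda_N}e_N(z)f(z)\Big|\le \|f\|_{L^\infty}\,\Big(N^{-d}\sum_{z\in\frac1N\Lambda_N}e_N(z)^2\Big)^{1/2}\Big(N^{-d}|\Lambda_N|\Big)^{1/2}\to 0,
\]
which is precisely how the paper closes the argument; a uniform estimate would require additional discrete Sobolev embeddings not derived there, so you should state and use only the $\ell^2$ bound.
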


\begin{proof}
We denote $G_{\frac1N}(x,y):=\E_{\Lambda_N}[ \vr_{Nx}\vr_{Ny}]$ for $x,y\in N^{-1} D_N$. Note that if $\Delta_{\frac1N}$ is the discrete Laplacian on $N^{-1}\Z^d$ then by~\eqref{eq:cov:gen} we have, for all $x\in N^{-1}\Lambda_N$,
\begin{equation}
\begin{cases}
\Big(\sum_{i=1}^K\frac{\kappa_i}{(2dN^2)^i}(-\Delta_{\frac1N})^i\Big)  G_{\frac1N}(x,y) = \delta_{x}(y)& y\in \frac1N\Lambda_N \\
G_{\frac1N}(x,y) = 0 & y\notin \frac1N\Lambda_N.
\end{cases}
\label{eq2:G_N}
\end{equation}
We have 
\begin{align*}
\var[(\Psi_N,\,f)]&= k^2\sum_{x, y\in \frac1N\Lambda_N} N^{-d-2} G_{\frac1N}(x,y) f(x) f(y)\\
&=\sum_{x\in \frac1N\Lambda_N} N^{-d} H_N(x) f(x)
\end{align*}
where $H_N(x)= k^2\sum_{y\in \frac1N\Lambda_N}N^{-2} G_{\frac1N}(x,y) f(y)$ for $x\in N^{-1} D_N$. It is immediate from \eqref{eq2:G_N} that $H_N$ is the solution of the following Dirichlet problem:
\begin{equation}\label{eq:dBVP}
 \begin{cases}
\Big(\sum_{i=1}^K\frac{\kappa_i}{(2dN^2)^i}(-\Delta_{\frac1N})^i\Big) H_N(x) = f(x)& x\in \frac1N\Lambda_N\\
H_N(x)= 0 & x\notin \frac1N\Lambda_N.
\end{cases}
\end{equation}
Define the error between the solutions of~\eqref{eq:dBVP} and \eqref{eq2:continuum} by $e_N(x):= H_N(x)-u(x)$ for $x\in N^{-1} D_N$. We use the estimate given in Theorem \ref{thm:one} and get
 \begin{equation}\label{eq:thomee2}
N^{-d} \sum_{x\in \frac1N\Lambda_N} e_N(x)^2\le CN^{-1}.
\end{equation}
Rewriting the variance we deduce
$$\var[(\Psi_N,\,f)]= \sum_{x\in \frac1N\Lambda_N} e_N(x) f(x)N^{-d} + \sum_{x\in \frac1N\Lambda_N} u(x) f(x) N^{-d}.$$
Note that by Cauchy-Schwarz inequality and \eqref{eq:thomee2} the first summand goes to zero as $N\to \infty$. The second term is a Riemann sum and converges to $\int_{D} u(x) f(x)\De x.$\qedhere
\end{proof}

\subsection{The continuum Gaussian free field}\label{subsec:defGFF}
In this case we consider $d\ge 2$ and $D$ and $\Lambda_N$ as in the previous Subsection. First we discuss briefly some definitions about the GFF. In $d=2$ the results can be found already in the literature, see for example \citet[Section 1.3]{bere}.

By the spectral theorem for compact self-adjoint operators we know that there exist eigenfunctions $(u_j)_{j\in\N}$ of $-\Delta_c$ corresponding to the eigenvalues $0<\lambda_1\le\lambda_2\le\ldots \to\infty$ such that $(u_j)_{j\ge 1}$ is an orthonormal basis of $L^2(D)$. By elliptic regularity, we have that $u_j$ is smooth for all $j$. Let $s>0$ and we define the following inner product on $C_c^\infty(D)$:
$$\langle f,\,g \rangle_s:=\sum_{j\in\N} \lambda_j^s\langle f\,,\,u_j\rangle_{L^2}\langle u_j\,,\,g\rangle_{L^2} .$$ 
Then $\mathcal H^s_0(D)$ can be defined to be the completion of $C_c^\infty(D)$ with respect to this inner product and $\mathcal H^{-s}(D)$ is defined to be its dual. Here we note that $\mathcal H^s_0(D) \subset L^2(D)\subset \mathcal H^{-s}(D)$ for any $s>0$. 
 
In case $f\in L^2(D)$ then we have $$\| f \|_{-s}^2=\sum_{j\in \N}\lambda_j^{-s}\la f \,,\,u_j\ra_{L^2}^2.$$

Also observe that $(\lambda_j^{-1/2}u_j)_{j\in\N}$ is an orthonormal basis of $\mathcal H^1_0(D)$. In the following Proposition we give the definition of the zero boundary continuum Gaussian free field $\Psi_D$ via its Wiener series, generalising the two-dimensional result of~\citet[Subsection 4.2]{Dubedat}. 
 
\begin{proposition}\label{prop:series_rep_h}
	Let $(\xi_j)_{j\in \N}$ be a collection of i.i.d. standard Gaussian random variables. Set the GFF with zero boundary conditions outside $D$ to be
	\[
	\Psi_D:=\sum_{j\in \N}\lambda_j^{-1/2}\xi_j u_j.
	\]
	Then $\Psi_D\in \mathcal H^{-s}(D)$ a.s. for all $s>{d}/2 -1$.
\end{proposition}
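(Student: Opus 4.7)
The plan is to show that the random series defining $\Psi_D$ converges in $\mathcal H^{-s}(D)$ almost surely, by first showing convergence in $L^2(\Omega,\mathcal H^{-s}(D))$ and then upgrading. The key ingredients are the orthonormality of $(u_j)$ in $L^2(D)$, the definition of the $\|\cdot\|_{-s}$ norm given just before the proposition, and Weyl's law for the Dirichlet eigenvalues of $-\Delta_c$ on a smooth bounded domain.

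First, I would define the partial sums $\Psi_D^{(n)}:=\sum_{j=1}^n \lambda_j^{-1/2}\xi_j u_j$. Since each $u_j\in L^2(D)\subset\mathcal H^{-s}(D)$, $\Psi_D^{(n)}\in \mathcal H^{-s}(D)$. Using the formula $\|f\|_{-s}^2=\sum_{k}\lambda_k^{-s}\la f,u_k\ra_{L^2}^2$ valid for $f\in L^2(D)$, and the orthonormality of $(u_j)$ in $L^2(D)$, I get
\[
\|\Psi_D^{(n)}-\Psi_D^{(m)}\|_{-s}^2 \;=\;\sum_{j=m+1}^{n}\lambda_j^{-s-1}\xi_j^2
\]
for $n>m$. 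Taking expectations reduces the question to whether $\sum_j \lambda_j^{-s-1}<\infty$.

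Next, I would invoke Weyl's law: for $-\Delta_c$ on a bounded smooth domain $D\subset\R^d$ with Dirichlet boundary conditions, the eigenvalues satisfy $\lambda_j\asymp j^{2/d}$ as $j\to\infty$. Hence $\lambda_j^{-s-1}\asymp j^{-2(s+1)/d}$, and the series $\sum_j \lambda_j^{-s-1}$ converges if and only if $2(s+1)/d>1$, i.e.\ $s>d/2-1$. Under this condition, $(\Psi_D^{(n)})$ is Cauchy in $L^2(\Omega,\mathcal H^{-s}(D))$ and therefore converges to some random element $\widetilde\Psi_D\in\mathcal H^{-s}(D)$ in $L^2$; testing against each $u_k$ identifies $\widetilde\Psi_D$ with the formal series $\Psi_D$.

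Finally, for almost sure convergence (not just $L^2$ convergence) I would note that $\|\Psi_D\|_{-s}^2=\sum_j \lambda_j^{-s-1}\xi_j^2$ is a sum of independent nonnegative terms with finite total expectation $\sum_j \lambda_j^{-s-1}<\infty$, so by the monotone convergence theorem it is almost surely finite; equivalently, the partial-sum sequence $(\|\Psi_D^{(n)}\|_{-s}^2)$ is a nonnegative submartingale bounded in $L^1$ and hence a.s.\ convergent. This gives $\Psi_D\in\mathcal H^{-s}(D)$ a.s., completing the proof. The only nontrivial input is Weyl's law; everything else is a routine manipulation of the eigenbasis, so I do not anticipate a serious obstacle.
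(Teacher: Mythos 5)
Your proof is correct and follows essentially the same route as the paper: expand $\|\Psi_D\|_{-s}^2=\sum_j\lambda_j^{-1-s}\xi_j^2$ in the eigenbasis and invoke Weyl's asymptotics $\lambda_j\asymp j^{2/d}$ to see the series is summable in expectation precisely when $s>d/2-1$. The only (harmless) difference is that you conclude a.s.\ finiteness via monotone convergence of the nonnegative series with finite expected sum, whereas the paper cites Kolmogorov's two-series theorem; your version is if anything slightly more elementary.
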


\begin{proof}
Fix $s>{d}/2 -1$. Clearly $u_j\in L^2(D)\subseteq \mathcal H^{-s}(D)$. We want to show that $\|\Psi_D\|_{-s} < \infty$ with probability one. We have
\begin{align*}
	\|\Psi_D\|^2_{-s} =\sum_{j\in \N}\lambda_j^{-1-s} \xi^2_j.
\end{align*}
The last sum is finite a.s. by Kolmogorov's two series theorem as we have 
\begin{align*}
\sum_{j\in \N}\E[\lambda_j^{-1-s}\xi^2_j] \asymp  \sum_{j\in \N}j^{-\frac2d(1+s)} <\infty
\end{align*}
and 
\begin{align*}
\sum_{j\in \N}\var[\lambda_j^{-1-s}\xi^2_j] \asymp \sum_{j\in \N}j^{-\frac4d(1+s)} <\infty.
\end{align*}
Here we have used the Weyl's asymptotic $\lambda_j\sim Cj^{\frac2d}$ for some explicit constant $C$. Thus we have $\Psi_D\in \mathcal H^{-s}(D)$ a.s.
\end{proof}

\subsection{Proof of Theorem~\ref{thm:critical_d}}\label{subsec:thm2finite}
We are now ready to show the main result on the scaling limit in the finite volume case. All notations are borrowed from Subsections~~\ref{subsec:preliminary}-\ref{subsec:defGFF}.

\begin{proof}[Proof of Theorem \ref{thm:critical_d}]
We first show that for $f\in C_c^\infty(D)$ 
\begin{equation}
\label{eq:toshowfdm}
( \Psi_N, f)\overset{d}\rightarrow (\Psi_D\,,\,f).\end{equation}
This follows from the following two observations: on the one hand by Proposition \ref{f1} and integration by parts we obtain 
$$\var[( \Psi_N, f)] \rightarrow \int_D u(x)f(x)\De x= \|f\|_{-1}^2.$$
On the other hand from the definition of GFF it follows that
 \begin{align*}
\var[(\Psi_D\,,\,f)]=\sum_{j\in \N}\lambda_j^{-1} \la u_j\,,\,f\ra_{L^2}^2=\|f\|_{-1}^2.
\end{align*}
Consequently we obtain~\eqref{eq:toshowfdm} since both $( \Psi_N, \,f)$ and $( \Psi_D, \,f)$ are centered Gaussians.

Next we want to show that the sequence $(\Psi_N)_{N\in \mathbb{N}}$ is tight in $\mathcal H^{-s}(D)$ for all $s > d$. It is enough to show that
\begin{equation}\label{limsup_psi_tight}
\limsup_{N\to \infty}\E_{\Lambda_N}[\|\Psi_N\|_{-s}^2]<\infty \quad \forall \, s>d.
\end{equation}
The tightness of $(\Psi_N)_{N\in \mathbb{N}}$ would then follow immediately from \eqref{limsup_psi_tight} and the fact that, for $0\le s_1<s_2$, $\mathcal H^{-s_1}(D)$ is compactly embedded in $\mathcal H^{-s_2}(D)$. 
In order to show \eqref{limsup_psi_tight} we first observe that for any $f\in\mathcal H^s_0(D)$
\begin{align*}
|( \Psi_N, f)|&=\Big{|}k\sum_{x\in \frac1N\Lambda_N }N^{-\frac{d+2}{2}}\vr_{Nx}\sum_{j\ge 1} \langle f\,,\,u_j\rangle_{L^2} u_j(x)\Big{|}\\
&=kN^{-\frac{d+2}{2}}\Big{|}\sum_{j\ge 1} \lambda_j^{-\frac{s}2}\sum_{x\in \frac1N\Lambda_N }\vr_{Nx}u_j(x) \lambda_j^{\frac{s}2}\langle f\,,\,u_j\rangle_{L^2}\Big{|}\\
&\le kN^{-\frac{d+2}{2}} \left(\sum_{j\ge 1} \lambda_j^{-s}\Bigg(\sum_{x\in \frac1N\Lambda_N }\vr_{Nx}u_j(x)\Bigg)^2\right)^{\frac12} \|f\|_{s}
\end{align*}
where in the first equality we have used the fact that $f\in L^2(D)$ and therefore $f=\sum_{j\ge 1} \langle f\,,\,u_j\rangle_{L^2} u_j$.
Thus we have, using the definition of dual norm,
\begin{align*}
\|\Psi_N\|_{-s}^2 \le \sum_{j\ge 1} \lambda_j^{-s} k^2 N^{-(d+2)}\Bigg(\sum_{x\in \frac1N\Lambda_N }\vr_{Nx}u_j(x)\Bigg)^2.
\end{align*}
By monotone convergence we obtain
\begin{align}
\E_{\Lambda_N}\|\Psi_N\|_{-s}^2 &\le \sum_{j\ge 1} \lambda_j^{-s} k^2 N^{-(d+2)}\sum_{x,y\in \frac1N\Lambda_N }G_{\frac1N}(x,y) u_j(x)u_j(y)\nonumber\\
& \le \sum_{j\ge 1} \lambda_j^{-s} k^2 N^{-2} \|G_{\frac1N}u_j\|_{\ell^2(\frac1N\Lambda_N )}\|u_j\|_{\ell^2(\frac1N\Lambda_N )}\label{eq:tight:inequality}
\end{align}
where for any grid function $f$ we define 
$$\| f\|^2_{\ell^2(\frac1N\Lambda_N )}:= N^{-d} \sum_{x\in \frac1N\Lambda_N } f(x)^2.$$
From~\eqref{eq2:G_N} it follows that $G_{\frac1N}$ is the Green's function for $\Big(\sum_{i=1}^K\frac{\kappa_i}{(2dN^2)^i}(-\Delta_{\frac1N})^i\Big)$. Let $\nu_1, \nu_2,\ldots$ be the eigenvalues of $G_{\frac1N}$. Define $P_i$ to be the projection on the $i$-th eigenspace. Then using orthogonality we have
\begin{equation}\label{eq:green:max}
\|G_{\frac1N}u_j\|^2_{\ell^2(\frac1N\Lambda_N )} = \sum_{i} \nu_i^2\|P_iu_j\|^2_{\ell^2(\frac1N\Lambda_N )}
\le \nu^2_{max} \|u_j\|^2_{\ell^2(\frac1N\Lambda_N )}
\end{equation}
where $\nu_{max}$ is the largest eigenvalue of $G_{\frac1N}$. Using ~\eqref{eq:green:max} in ~\eqref{eq:tight:inequality} we obtain
\begin{align*}
\E_{\Lambda_N}\|\Psi_N\|_{-s}^2 &\le \sum_{j\ge 1} \lambda_j^{-s} k^2 N^{-2} \nu_{max}\|u_j\|_{\ell^2(\frac1N\Lambda_N )}^2\\
&\le C \sum_{j\ge 1} \lambda_j^{-s} k^2 N^{-2} \nu_{max}\left( \sup_{x\in D}u_j(x)\right)^2.
\end{align*}
From Theorem 1.4 in \cite{BV1999} we know that for any $x\in D$, $|u_j(x)|\le \lambda_j^{{d}/{4}}$.  On the other hand from Theorem \ref{thm:spec_gap} we know that $\lambda_1^{-1} $ is approximated by $ N^{-2}\nu_{\max}$, therefore $N^{-2}\nu_{\max}$ is bounded above (as $\lambda_1$ is bounded away from zero). Using these observations we have
\begin{align*}
\limsup_{N\to\infty}\E_{\Lambda_N}\|\psi_N\|_{-s}^2 \le C \sum_{j\ge 1} \lambda_j^{-s+\frac{d}2}.
\end{align*}
The last sum is finite whenever $s>d$.

Thus we have proved \eqref{limsup_psi_tight}.~A standard uniqueness argument using the facts that $\mathcal H^{-s}(D)$ is the topological dual of $\mathcal H^{s}_0(D)$ and $C_c^\infty(D)$ is dense in $\mathcal H_0^s(D)$ (see proof of Theorem 3.11 of \cite{mm_scaling}) completes the proof of Theorem~\ref{thm:critical_d}. 
\end{proof}

\subsection{One-dimensional case}\label{sec:d=1}




In this case for simplicity we consider $D=(0,1)$ and the corresponding $D_N$, $\Lambda_N$ and the model as defined in Subsection~\ref{subsec:preliminary}, in particular $\Lambda_N=\{2,\,\ldots,\,N-2\}$. To study the scaling limit we define a continuous interpolation $\psi_N$ for each $N$ as follows:
\begin{align*}
\psi_N(t)=kN^{-\frac12}\left[\vr_{\floor{Nt}} + (Nt-\floor{Nt})(\vr_{\floor{Nt}+1}-\vr_{\floor{Nt}})\right] , \,\,\,\,t\in \overline D.
\end{align*}

The proof of Theorem \ref{thm:small_d} follows if one can show tightness in $C[0,1]$ and the finite dimensional convergence. The tightness follows from bounds on the variance of the increments of the process by that of the increments of the Gaussian free field. Then one can apply Theorem 14.9 of \cite{kallenberg:foundations}.  
The bound on the variance of the increments is obtained by an application of the Brascamp--Lieb inequality and the random walk representation of the discrete Gaussian free field. 
First of all, we recall here for the reader's convenience part of the statement of the Brascamp--Lieb inequality.
\begin{theorem}[{\citet[Theorem~5.1]{brascamp_lieb}}]\label{thm:BL}
 Let $F(\cdot)$ be a non-negative function on $\R^d$ and let $A$ be a real, positive-definite, $n\times n$ matrix. Assume $\exp(-\la x,\,Ax\ra)F(x)\in L^1$ and define the measure $\mathbb P$ whose density with the respect to the $d$-dimensional Lebesgue measure is
 \[
  \frac{\De\mathbb P(x)}{\De x}:=\frac{\exp(-\la x,\,Ax\ra)F(x)}{\int\exp(-\la x,\,Ax\ra)F(x)\De x}
 \]
 If $F\equiv 1$ we write $\mathbb E_1.$ Let $\phi\in\R^d$, $\alpha\in\R$. Then
 \[
  \mathbb E\left[\left|\la \phi,\,x\ra-\mathbb E[\la\phi,\,x\ra]\right|^\alpha\right]
 \le \mathbb E_1\left[\left|\la\phi,\,x\ra\right|^\alpha\right]\]
 when $F$ is log-concave and $\alpha\ge 1$.
\end{theorem}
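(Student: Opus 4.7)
The plan is to reduce the multivariate moment inequality to a one-dimensional comparison via the Pr\'ekopa--Leindler inequality, and then settle the one-dimensional inequality by a monotone-transport contraction argument.

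\textbf{Step 1: reduction to one dimension.} The joint density of $\mathbb P$, proportional to $e^{-\la x, Ax\ra} F(x)$, is log-concave on $\R^d$ as the product of two log-concave functions. After the linear change of variable $y := A^{1/2}x$ (which takes $\mathbb P_1$ to a standard Gaussian and $\mathbb P$ to a log-concave perturbation of it) and an orthogonal rotation aligning $A^{-1/2}\phi$ with the first coordinate axis, the marginal density of $X := \la\phi, x\ra$ under $\mathbb P$ takes the canonical form
\[
g(t) \;=\; Z_0^{-1}\, e^{-t^2/(2\sigma_1^2)}\, \widetilde F(t),\qquad \sigma_1^2 := \tfrac{1}{2}\la \phi, A^{-1}\phi\ra,
\]
with $\widetilde F$ log-concave on $\R$: it is obtained by integrating a log-concave function against a Gaussian weight over the directions perpendicular to $\phi$, and Pr\'ekopa--Leindler preserves log-concavity under such marginalization. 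Since $\mathbb E_1[|X|^\alpha] = \int |t|^\alpha g_1(t)\,\mathrm dt$ with $g_1$ the centered Gaussian of variance $\sigma_1^2$, the claim reduces to the one-dimensional statement
\[
\int_\R |t - \bar t|^\alpha g(t)\,\mathrm dt \;\le\; \int_\R |t|^\alpha g_1(t)\,\mathrm dt,\qquad \bar t := \int t\, g(t)\,\mathrm dt.
\]

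\textbf{Step 2: contractive transport.} Let $T:\R\to\R$ be the unique non-decreasing map pushing $g_1$ forward to the centered density $g(\cdot + \bar t)$. The decisive estimate is that $T$ is $1$-Lipschitz. Writing the target potential as $V(s) = (s+\bar t)^2/(2\sigma_1^2) - \log \widetilde F(s+\bar t) + \mathrm{const}$, log-concavity of $\widetilde F$ gives $V''(s) \ge 1/\sigma_1^2$, which coincides with the potential Hessian of the source $g_1$. In one dimension this Hessian comparison (a special case of the Caffarelli contraction theorem) can be verified directly from the monotone-transport ODE $g(T(u)+\bar t)\,T'(u) = g_1(u)$ via a maximum-principle argument bounding $T'$ pointwise by $1$.

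\textbf{Step 3: moment comparison and main obstacle.} Since $T$ pushes $g_1$ to $g(\cdot + \bar t)$,
\[
\int_\R |t-\bar t|^\alpha g(t)\,\mathrm dt \;=\; \int_\R |T(u)|^\alpha g_1(u)\,\mathrm du,
\]
with the centering $\int T(u)\,g_1(u)\,\mathrm du = 0$. When $\widetilde F$ is symmetric about its mode, $T$ is odd with $T(0)=0$, and $1$-Lipschitzness immediately yields $|T(u)| \le |u|$; in the general case, symmetry of $g_1$, $1$-Lipschitzness of $T$, and convexity of $|\cdot|^\alpha$ for $\alpha \ge 1$ combine via a standard convex-ordering argument to give $\int |T(u)|^\alpha g_1(u)\,\mathrm du \le \int |u|^\alpha g_1(u)\,\mathrm du$, completing the proof. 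The analytic heart of the argument is Step 2: the $1$-Lipschitz bound on $T$ encodes the full force of the log-concavity of $F$ and explains the sharp constant $1$ on the right-hand side; without log-concavity of $F$ the transport map need not be a contraction and the inequality can fail. Step 3 is elementary in the symmetric case, and the extra care needed in the non-symmetric case is confined to the convex-ordering step, which exploits only the centering of $T$ and the symmetry of the Gaussian $g_1$.
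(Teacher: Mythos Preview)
The paper does not give a proof of this theorem: it is quoted as Theorem~5.1 of Brascamp--Lieb (1976) and only restated so that it can be invoked in Lemma~\ref{lem:var_bound}. There is therefore nothing in the paper to compare your argument against.

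That said, your proof is a correct modern route to the inequality, via the one-dimensional Caffarelli contraction theorem. Steps~1 and~2 are standard and correctly stated. In Step~3 the ``standard convex-ordering argument'' you invoke is valid, but it relies on more than the $1$-Lipschitz bound alone: you also need that $T$ is non-decreasing (which it is, being the monotone rearrangement). With both properties, $R(u):=u-T(u)$ is non-decreasing and has mean zero under $g_1$; writing $Q$ for the Gaussian quantile function, the map $t\mapsto\int_t^1 R(Q(p))\,\mathrm{d}p$ is then concave and vanishes at $t=0$ and $t=1$, hence is non-negative on $[0,1]$, which is precisely the integrated-quantile criterion for $T(U)\le_{\mathrm{cx}}U$. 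Convexity of $|\cdot|^\alpha$ for $\alpha\ge 1$ then finishes. The symmetry of $g_1$ that you mention is not actually needed for this step. Finally, this route differs from the original 1976 argument of Brascamp and Lieb, which predates Caffarelli's contraction theorem.
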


Then the following bound is a consequence of the above result, as we are going to show in a moment.
\begin{lemma} For all $x\in\Z$\label{lem:var_bound}
\begin{equation}\label{var_bound}
G_{\Lambda_N}(x,\,x)\le \E_{\Lambda_N}^{GFF}(\vr_x^2).
\end{equation}
Moreover there exists $C>0$ such that for all $x,\,y\in\Z$
\begin{align}\label{eq:grad_var}
\E_{\Lambda_N}[(\vr_{x} - \vr_y)^2] \le C |y-x|.
\end{align} 
\end{lemma}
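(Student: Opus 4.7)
The plan is to reduce both inequalities to bounds for the discrete Gaussian free field by means of the Brascamp--Lieb inequality (Theorem~\ref{thm:BL}), and then to use the explicit form of the one-dimensional DGFF covariance.

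First I would split the mixed Hamiltonian as
\[
H(\vr)=\frac{\kappa_1}{2}\langle\vr,-\Delta\vr\rangle_{\ell^2(\Lambda_N)}+\frac{\kappa_2}{2}\|\Delta\vr\|^2_{\ell^2(\Lambda_N)}.
\]
This exhibits the density of $\prob_{\Lambda_N}$ (with respect to Lebesgue on $\R^{\Lambda_N}$) as being proportional to $\exp(-\langle\vr,A\vr\rangle)\,F(\vr)$ with $A=\frac{\kappa_1}{2}(-\Delta)$, a real, symmetric, positive--definite matrix on $\R^{\Lambda_N}$ (zero boundary conditions being already incorporated), and $F(\vr)=\exp\bigl(-\frac{\kappa_2}{2}\|\Delta\vr\|^2\bigr)$. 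Since $\vr\mapsto\|\Delta\vr\|^2$ is a non--negative convex quadratic form, $F$ is log--concave. Moreover, the Gaussian reference measure with density $\propto\exp(-\langle\vr,A\vr\rangle)$ is exactly the DGFF on $\Lambda_N$ with zero boundary conditions outside, i.e.\ the measure defining $\E_{\Lambda_N}^{GFF}$; and both $\prob_{\Lambda_N}$ and this DGFF measure are invariant under $\vr\mapsto-\vr$, so all linear functionals of $\vr$ are centered.

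For \eqref{var_bound}, I would apply Theorem~\ref{thm:BL} with $\phi=e_x\in\R^{\Lambda_N}$ the standard basis vector at $x$ and $\alpha=2$: this directly gives
\[
G_{\Lambda_N}(x,x)=\E_{\Lambda_N}[\vr_x^2]\le\E_{\Lambda_N}^{GFF}[\vr_x^2].
\]
For \eqref{eq:grad_var}, the same argument with $\phi=e_x-e_y$ yields
\[
\E_{\Lambda_N}[(\vr_x-\vr_y)^2]\le\E_{\Lambda_N}^{GFF}[(\vr_x-\vr_y)^2],
\]
and it remains to show that the right--hand side is at most $C|x-y|$.

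This last step is the classical one--dimensional DGFF estimate. Using the random walk representation (or equivalently solving $-\Delta G(\cdot,y)=\delta_y$ on $\Lambda_N$ explicitly), one obtains $G^{GFF}_{\Lambda_N}(x,y)=C_{\kappa_1}\min(x,y)(N-\max(x,y))/N$ with a constant depending only on $\kappa_1$. A direct computation then gives $\E_{\Lambda_N}^{GFF}[(\vr_x-\vr_y)^2]=C_{\kappa_1}|x-y|(1-|x-y|/N)\le C|x-y|$, completing the proof. I do not expect a serious obstacle: the only points requiring a line of justification are the log--concavity of $F$ (immediate from convexity of $\|\Delta\vr\|^2$) and the identification of the Brascamp--Lieb Gaussian reference with the DGFF, which is a matching of quadratic forms.
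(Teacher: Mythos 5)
Your proposal is correct and follows essentially the same route as the paper: both parts are reduced to the DGFF via the Brascamp--Lieb inequality (Theorem~\ref{thm:BL}) with $A$ the restricted gradient quadratic form and $F$ the log-concave factor from the higher-order terms, and the one-dimensional DGFF increment variance is then bounded by $C|x-y|$ (you via the explicit bridge Green's function, the paper via the conditioned random-walk representation --- the same computation). The only cosmetic difference is that the lemma sits in the setting of the general Hamiltonian~\eqref{gen:ham}, so $F$ should be $\exp\bigl(-\tfrac12\langle\vr,\sum_{i\ge 2}\kappa_i(-\Delta)^i_\Lambda\vr\rangle\bigr)$ rather than only the bilaplacian term; log-concavity holds verbatim since each $(-\Delta)^i_\Lambda$ is positive semi-definite.
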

\begin{proof}
Note that we actually have 
\begin{align*}
H(\vr)|_{\vr\equiv 0 \text{ on } \Lambda_N^c}= \frac{1}{2}\langle\vr,(-\Delta_\Lambda+\sum_{i=2}^K \kappa_i(-\Delta)_\Lambda^{i})\vr\rangle_{\ell^{2}(\Lambda_N)}
\end{align*}
where $\Delta_\Lambda$ and $(-\Delta)_\Lambda^i$ denote the restriction of the operators $\Delta$ and $(-\Delta)^i$ to functions which are zero outside $\Lambda_N$, respectively.
In the Brascamp--Lieb inequality set $$F((\vr_x)_{x\in\Lambda_N}):=\exp\left[-\frac{1}{2}\langle\vr, \sum_{i=2}^K\kappa_i(-\Delta)_\Lambda^{i}\vr\rangle_{\ell^{2}(\Lambda_N)}\right]$$ on $\R^{\Lambda_N}$ with $A:=-1/2\,\Delta_\Lambda$ and $\alpha:=2$.
The first part of the statement is an immediate consequence of Theorem~\ref{thm:BL}. As for the second part, due to the boundary conditions note it suffices to prove~\eqref{eq:grad_var} for $i\in\{1,\,\ldots,\,N-1\}$.
From Theorem~\ref{thm:BL} we have 
\begin{align*}
\E_{\Lambda_N}[(\vr_{x} - \vr_y)^2]\le \E^{DGFF}_{\Lambda_N}[(\vr_{x} - \vr_y)^2] .
\end{align*}
Let $(X_m)_{m=2}^{N-1}$ be a collection of i.i.d.~$\mathcal N(0,2)$ random variables and let $S=(S_i)_{i=1}^{N-1}$ be the simple random walk on $\Z$ with $X_m$'s as increments. We have that the field $(\vr_1,\ldots,\vr_{N-2},\vr_{N-1})$ under $\prob_{\Lambda_N}^{DGFF}$ has the same law of $S$ conditionally on $S_1=S_{N-1}=0$. Now we define the process $(S_1^{'},\ldots,S_{N-1}^{'})$ by $$S_i^{'}:=S_i-\frac{i-1}{N-2}S_{N-1}.$$
As a consequence
$$(S_1,\ldots,S_{N-1}|S_1=S_{N-1}=0)\overset{d}=(S_1^{'},\ldots,S_{N-1}^{'}).$$
Then for $1\le i<j\le N-1$ we have
\begin{align*}
\E[(S_j^{'}-S_i^{'})^2]&=\E\Bigg[\left(\sum_{m=i+1}^{j}X_m -\frac{j-i}{N-2}S_{N-1}\right)^2\Bigg]\\
&= 2(j-i) + 2\frac{(j-i)^2}{N-2} - 2\frac{(j-i)^2}{N-2} 2\\
&=2(j-i)\left[1-\frac{j-i}{N-2}\right].
\end{align*}
This shows the statement.
\end{proof}

 \subsubsection{Proof of Theorem \ref{thm:small_d}} It is easy to see that $(\psi_N(0))_{N\ge 1}$ is tight. Using the properties of Gaussian laws and~\eqref{eq:grad_var}, it can be shown easily that the following holds: there exists $C>0$ such that
	\begin{align}
	\E_{\Lambda_N}\left[ | \psi_N(t)- \psi_N(s)|^2\right]\le C |t-s| \label{eq:moment}
	\end{align}
for all $t,\,s\in\overline D$ uniformly in $N$.
Hence tightness follows. 

To conclude the finite dimensional convergence we first show the convergence of the covariance matrix. Let $G_D$ be the Green's function for the problem 
\begin{align*}
\begin{cases}
-\frac{\De^2}{\De x^2} u(x) = f(x) & x\in D\\
u(x)=0 & x\in\partial D.
\end{cases}
\end{align*}
We note here that $$G_D(x,y)=x\wedge y -xy,\quad x,y\in\overline D$$ which also turns out to be the covariance function of the Brownian bridge, denoted by $(B_t^\circ:0\le t\le 1)$.
For $x,y\in \overline D\cap N^{-1}\Z$ we define
 $$G_{\frac1N}(x,y):=\frac{k^2}{N}G_{\Lambda_N}(Nx,Ny).$$ 
 
 We now interpolate $G_{\frac1N}$ in a piece-wise constant fashion on small squares of $\overline D \times \overline D$ to get a new function $G_{\frac1N}^I$: we define the value of $G_{\frac1N}^I$ in the square $[x, x+1/N)\times [y,y+1/N)$ to be equal to $G_{\frac1N}(x,y)$ for all $x,\,y$ in $\overline{D}\cap N^{-1}\Z$. We show that $G_{\frac1N}^I$ converges uniformly to $G_D$ on $\overline D\times\overline D$. Indeed, let $F_N:=G_{\frac1N}^I - G_D$. From the proof of Proposition \ref{f1} it follows that, for any $f,\,g\in C_c^\infty(D)$,
\begin{align*}
\lim_{N\to\infty}\sum_{x, y\in \frac1N D_N} N^{-2} G^I_{\frac1N}(x,y) f(x) g(y)= \iint_{D\times D} G_D(x,y)f(x)g(y)\De x \De y.
\end{align*}
Again from Riemann sum convergence we have 
\begin{align*}
\lim_{N\to\infty}\sum_{x, y\in \frac1ND_N} N^{-2} G_D(x,y) f(x) g(y)=\iint_{D\times D} G_D(x,y)f(x)g(y)\De x \De y.
\end{align*}
Thus we get
\begin{align}
\lim_{N\to\infty}\sum_{x, y\in \frac1N D_N} N^{-2} F_N(x,y) f(x) g(y)=0\label{eq:unique_zero}.
\end{align}
Note that $G_D$ is bounded and $$\sup_{x,y\in\frac1N D_N}|G_{\Lambda_N}(Nx,Ny)|\overset{\eqref{var_bound}}\le C\sup_{z\in D_N}\E^{GFF}_{\Lambda_N}[\vr_z^2]\le CN.$$
These imply that $$\sup_{x,y\in\overline D} |F_N(x,y)|\le C.$$
Thus $F_N$ has a subsequence converging uniformly to some function $F$ which is bounded by $C$. With abuse of notation we denote this subsequence by $F_N$. We then have 
\begin{align*}
\lim_{N\to\infty}\sum_{x, y\in \frac1N D_N} N^{-2} F_N(x,y) f(x) g(y)= \iint_{D\times D}  F(x,y)f(x)g(y)\De x \De y.
\end{align*}
Uniqueness of the limit gives
$$\iint_{D\times D}  F(x,y)f(x)g(y)\De x \De y = 0$$
 by~\eqref{eq:unique_zero}. From this we obtain that $F(x,y)=0$ for almost every $x$ and almost every $y$. The definition by interpolation of $G^I_{\frac1N}$ ensures that $F$ is pointwise equal to zero. Finally, the fact that the original sequence $F_N$ converges uniformly to zero follows using the subsequence argument. 

We now show the finite dimensional convergence. First let $t\in\overline D$. We write 
$$\psi_N(t)=\psi_{N,1}(t) + \psi_{N,2}(t)$$
where $\psi_{N,1}(t):=kN^{-\frac12}\vr_{\floor{Nt}}$ and $\psi_{N,2}(t):= k N^{-\frac12}(Nt-\floor{Nt})(\vr_{\floor{Nt}+1}-\vr_{\floor{Nt}})$. From \eqref{eq:grad_var} it follows that $\E_{\Lambda_N}[\psi_{N,2}(t)^2]$ goes to zero as $N$ tends to infinity. Therefore to show that $\psi_N(t)\overset{d}\to B_t^\circ$ it is enough to show that $\var[\psi_{N,1}(t)]\to G_D(t,t)$. But we have
\begin{align*}
\var[\psi_{N,1}(t)]=k^2N^{-1}G_{\Lambda_N} \left(\floor{Nt},\,\floor{Nt}\right)=  G^I_{\frac1N}(t,t)\to G_D(t,t)
\end{align*}
since the sequence $F_N$ converges to zero uniformly.
Since the variables under consideration are Gaussian, one can show the finite dimensional convergence using the convergence of the Green's functions. \qed


 
 

\section{Error estimate in the discrete approximation of the Dirichlet problem}\label{sec:Thomee}
This section is devoted to showing that the solution of the continuum Dirichlet problem can be approximated well by the Green's function of the model, and we will give a quantitative meaning to this statement. We shall use the ideas from \cite{thomee}, namely, to employ a truncated operator with which the problems of approximation around the boundary of the discretised domain can be ignored in a nice manner. We recall that the error estimates we mention now were essential to the proof of Theorem~\ref{thm:critical_d}. We begin by introducing some definitions.

In this section we consider $D$ to be any bounded domain in $\mathbb{R}^d$ with boundary $\partial D$ which is $C^2$. We consider the following continuum Dirichlet problem
\begin{equation}\label{eqa:continuum}\begin{cases}
Lu(x) = f(x)& x\in D\\
u(x)=0 & x\in \partial D .
\end{cases}
\end{equation}
where $L$ is the elliptic differential operator $L:=-\Delta_c $.

Let $h>0$ and consider
$$L_h=\sum_{i=1}^K \kappa_i \left( \frac{h^2}{2d}\right)^{i-1} (-\Delta_h)^i,$$
where $\Delta_h$ is defined by
$$\Delta_h f(x):=\frac{1}{h^2}\sum_{i=1}^d\left(f(x+he_i)+f(x-he_i)-2f(x)\right)$$
and $f$ is any function on $h\Z^d$. Let $D_h$ be the set of grid points in $\overline D$ with $D_h= \overline D\cap h\Z^d$. $R_h$ be the largest subset of $D_h$ such that $$R_h\cup\partial_{K,h} R_h\subset D_h,$$
where $\partial_{K,h}R_h=\{D_h\setminus R_h:\,  \mathrm{dist}_h(x, R_h)\le K\}$ with $\mathrm{dist}_h(\cdot\,,\cdot)$ being the graph distance in the lattice $h\Z^d$.
Define $B_h= D_h\setminus R_h$. Let $R_h^\ast$ be the largest subset of $R_h$ such that
$$R_h^\ast\cup \partial_{K,h}R_h^{\ast}\subset R_h$$
and define $B_h^\ast= R_h\setminus R_h^\ast$.  For a grid function $f$ we define 
$$R_hf(x)= \begin{cases}
f(x) &  x\in R_h\\
0 & x\notin R_h.
\end{cases}$$

We now define the finite difference analogue of the Dirichlet's problem \eqref{eqa:continuum}. For given $h$, we look for a function $u_h(\xi)$ defined on $D_h$ such that 
\begin{align}
L_hu_h(\xi)=f(\xi), \quad \xi\in R_h. \label{eq:discrete}
\end{align} 
We consider furthermore the boundary conditions
\begin{align}
u_h(\xi)=0 ,\quad\xi\in B_h. \label{eq:discrete boundary}
\end{align}
One can argue that the finite difference Dirichlet problem \eqref{eq:discrete} and \eqref{eq:discrete boundary} has exactly one solution for arbitrary $f$ \cite[Theorem~5.1]{thomee}.

For grid functions vanishing outside $D_h$ we define the norm $\|\cdot\|_h$ by
$$\lVert f\rVert^2_h:= h^d \sum_{\xi\in D_h} f(\xi)^2.$$
Mind that we are using this norm only in the current Section and thus there is no risk of confusion with the norm defined in Subsection~\ref{subsec:defGFF}.
We now prove the main result of this Section.
\begin{theorem}\label{thm:one}
	Let $u\in\mathcal{C}^3(\overline{D})$ be a solution of the Dirichlet's problem~\eqref{eqa:continuum} and $u_h$ be the solution of the discrete problem ~\eqref{eq:discrete} and ~\eqref{eq:discrete boundary}. If $e_h:=u-u_h$ then for sufficiently small $h$ we have
	$$\|R_he_h\|_{h}^2 \le C \left[M_3^2h^2+ h(M_3^2h^4+M_1^2)\right]$$
	where $M_k=\sum_{\lvert\alpha\rvert\le k}\sup_{x\in D}\lvert D^\alpha u(x)\rvert$.
\end{theorem}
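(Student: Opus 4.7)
My plan is to follow the standard finite-difference error analysis. Define the error $e_h := u - u_h$ as a grid function on $D_h$. Subtracting \eqref{eqa:continuum} from \eqref{eq:discrete} one finds $L_h e_h = \tau_h$ on $R_h$, where the consistency error is
\[
\tau_h(x) := (L_h - L)u(x) = L_h u(x) - f(x).
\]
On $B_h$ the error equals $u|_{B_h}$, not $0$, because $u$ does not vanish exactly on the lattice boundary. Accordingly, I will split $e_h = v_h + w_h$ on $D_h$: the particular solution $v_h$ satisfies $L_h v_h = \tau_h$ on $R_h$ with $v_h = 0$ on $B_h$, while the discrete $L_h$-harmonic extension $w_h$ satisfies $L_h w_h = 0$ on $R_h$ and $w_h = u|_{B_h}$ on $B_h$. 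By linearity it suffices to bound $\|R_h v_h\|_h$ and $\|R_h w_h\|_h$ separately.

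The first key step is a uniform pointwise estimate $|\tau_h(x)| \le C h M_3$ on $R_h$. For the leading summand, Taylor expansion of $u \in C^3(\overline D)$ gives $|(-\Delta_h u)(x) - (-\Delta_c u)(x)| \le C h M_3$. For each $i \ge 2$ the summand $\kappa_i (h^2/(2d))^{i-1}(-\Delta_h)^i u$ looks dangerous because the iterated discrete Laplacian has sup-norm operator bound of order $h^{-2(i-1)}$; the structural saving is that the prefactor $(h^2)^{i-1}$ cancels it, provided we exploit one step of Taylor smoothness. Concretely, after writing $(-\Delta_h)^i u = (-\Delta_h)^{i-1}\bigl[-\Delta_c u + O(hM_3)\bigr]$ and noting that $-\Delta_c u \in C^1(\overline D)$ with norm $\le CM_3$, a simple induction yields $|(-\Delta_h)^i u(x)| \le C h^{-(2i-3)} M_3$ for $i \ge 2$. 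Multiplying by $(h^2/(2d))^{i-1}$ gives a contribution of order $hM_3$. Summing over $i$ yields $|\tau_h(x)| \le C h M_3$ on $R_h$, and therefore $\|\tau_h\|_h^2 \le C h^2 M_3^2$ since $h^d |R_h| \le |D|$.

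For $v_h$ I will use a standard discrete energy inequality. Since each $(-\Delta_h)^i$ is a positive self-adjoint operator on grid functions vanishing outside $R_h$, summation by parts gives $\langle L_h \phi, \phi\rangle_h \ge \langle -\Delta_h \phi, \phi\rangle_h$, which is the discrete Dirichlet form. A discrete Poincar\'e inequality (which holds since $D$ is bounded) yields $\|\phi\|_h^2 \le C \langle L_h \phi, \phi\rangle_h$. Testing $L_h v_h = \tau_h$ against $v_h$ and applying Cauchy--Schwarz gives $\|v_h\|_h \le C\|\tau_h\|_h \le C h M_3$, so $\|R_h v_h\|_h^2 \le C h^2 M_3^2$, which accounts for the first term of the stated bound.

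The more delicate part is controlling $w_h$, and I expect this to be the main obstacle. Since $u \in C^1(\overline D)$ vanishes on $\partial D$ and every point of $B_h$ lies within Euclidean distance $Ch$ of $\partial D$, we have $|u(x)| \le C M_1 h$ on $B_h$. If $L_h$ were simply $-\Delta_h$, the discrete maximum principle would propagate this bound to $|w_h| \le C M_1 h$ pointwise and the estimate would be immediate. Because of the higher-order $(-\Delta_h)^i$ corrections the maximum principle is unavailable; I will instead extend the boundary values to an auxiliary function $\Phi_h$ on $D_h$, equal to $u$ on $B_h$ and tapering to $0$ away from the boundary, then estimate $w_h - \Phi_h$ (which vanishes on $B_h$) by the energy method above, now with right-hand side $-L_h \Phi_h$. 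The boundary strip contains $O(h^{1-d})$ lattice sites, and careful tracking of how the higher-order discrete operators act on $\Phi_h$ near $B_h$ is expected to yield $\|R_h w_h\|_h^2 \le C\bigl(M_1^2 h + M_3^2 h^5\bigr)$, with the $M_3^2 h^5$ tail arising from a cross term between the boundary layer of $\Phi_h$ and the pointwise bound on $L_h \Phi_h$. Combined with the bound on $v_h$, this gives the stated estimate.
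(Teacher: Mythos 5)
Your interior analysis is fine and essentially reproduces the paper's consistency step: the cancellation of the $h^{-2(i-1)}$ growth of $(-\Delta_h)^i$ against the prefactor $(h^2/2d)^{i-1}$ gives $|L_hu-Lu|\le CM_3h$ on $R_h$ (the paper writes this as $L_hu=Lu+h^{-2}\mathcal R_3$ with $|\mathcal R_3|\le CM_3h^3$), and your energy/discrete Poincar\'e bound for $v_h$ yields the $M_3^2h^2$ term. The genuine gap is the $w_h$ step, which is exactly where the content of the theorem sits. As written, ``the energy method above'' means testing $L_h(w_h-\Phi_h)=-L_h\Phi_h$ against $z_h:=w_h-\Phi_h$ and applying Cauchy--Schwarz in $\ell^2$, i.e.\ $\|z_h\|_h\le C\|L_h\Phi_h\|_h$; no choice of taper makes this small enough. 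If $\Phi_h$ drops from $u|_{B_h}=O(M_1h)$ to $0$ over one mesh layer, then $|L_h\Phi_h|\sim M_1h^{-1}$ on a set of $\|\cdot\|_h^2$-measure $O(h)$, so $\|L_h\Phi_h\|_h^2\sim M_1^2h^{-1}$, a factor $h^{-2}$ too large. If instead you taper over a width $\delta\gg h$ (e.g.\ $\Phi_h=u\,\eta_\delta$ with a smooth cutoff), then on the inner part of the layer $L_h\Phi_h\approx Lu=f$, so $\|L_h\Phi_h\|_h^2\gtrsim M_2^2\delta+M_1^2\delta^{-1}$, which is at best $O(M_1M_2)$ --- nowhere near the required $O(M_1^2h+M_3^2h^2)$. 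Hence your claimed $\|R_hw_h\|_h^2\le C(M_1^2h+M_3^2h^5)$ is asserted (``is expected to yield''), not proved; the $M_3^2h^5$ ``cross term'' has no derivation.

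What is missing is a stability estimate that tolerates an $O(M_1h)$ boundary mismatch at the price of only the boundary-layer volume $O(h)$. Within your decomposition this can be obtained by correcting the estimate: sum by parts once more and use the bilinear form, $B_h[z_h,z_h]=-B_h[\Phi_h,z_h]\le B_h[\Phi_h,\Phi_h]^{1/2}B_h[z_h,z_h]^{1/2}$, so $\|z_h\|_h^2\le CB_h[\Phi_h,\Phi_h]$; with a one-layer taper one has $|\nabla_h\Phi_h|\le CM_1$ on a set of measure $O(h)$, and the higher-order terms are again killed by their $h^{2(i-1)}$ prefactors, giving $B_h[\Phi_h,\Phi_h]\le CM_1^2h$. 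This discrete trace/extension bound is the ingredient your sketch lacks. The paper takes a different route with no decomposition at all: following Thom\'ee, it replaces $L_h$ by the truncated operator $L_{h,1}$, which multiplies the equations at the near-boundary sites $B_h^\ast$ by $h$, and invokes the a priori estimate $\|R_he_h\|_h\le C\|L_{h,1}R_he_h\|_h$ (Thom\'ee's Lemma 3.1 and Theorem 4.2); the weight $h$ on the $O(h^{-(d-1)})$ sites of $B_h^\ast$ converts the $O(M_1h^{-1})$ and $O(M_3h)$ row errors there into exactly the $h(M_1^2+M_3^2h^4)$ terms of the statement. Either mechanism is fine, but one of them (or an equivalent weighted stability estimate) must be proved or cited; without it your boundary step does not give the theorem.
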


\begin{proof}
	We denote by $C$ all constants which do not depend on $u,\, f$. A standard Taylor's expansion gives for all $x\in R_h$
	$$L_hu(x)=Lu(x)+h^{-2} \mathcal R_3(x)$$
	where 
	\begin{equation}\lvert \mathcal R_3(x)\rvert\leq CM_3h^3.
	 \label{eq:R3}
	\end{equation}
So we obtain for $\xi\in R_h$
	\begin{align*}
	L_he_h(\xi)&=L_hu(\xi)-L_hu_h(\xi)\\
	&=Lu(\xi)+h^{-2}\mathcal R_3(\xi)-L_hu_h(\xi)\\
	&=h^{-2}\mathcal R_3(\xi).
	\end{align*}
The {truncated operator} $L_{h,1}$ is defined as follows:
	$$L_{h,1} f(x) := \begin{cases}
	L_h f(x)  & x\in R_h^\ast\\
	h L_h f(x)  & x\in B_h^\ast\\
	0 & x\notin R_h.
	\end{cases}$$
	For $\xi\in R^*_h$ we have
	\begin{align}\label{eq:Rh}
	L_{h,1}R_he_h(\xi)&= L_hR_he_h(\xi)=L_he_h(\xi)=h^{-2}\mathcal R_3(\xi).
	\end{align}
	As the value of the solution of \eqref{eqa:continuum} is known to be zero on the boundary $\partial D$,  we have for $\eta\in B_h$ $$u(\eta)=u_h(\eta)+ \mathcal R_1(\eta)$$ where $\lvert \mathcal R_1(\eta)\rvert\leq CM_1h$. 
	
	For $\xi\in B_h^\ast$ denote by $$S(\xi)= \{ \eta: \eta\in B_h\setminus (B_h\cap \partial D): \mathrm{dist}_h(\eta,\xi)\le K\}.$$ Therefore, for $\xi\in B_h^\ast$,
	\begin{align}
	L_{h,1} R_he_h(\xi)&= h L_h R_he_h(\xi)\nonumber\\
	&=h \{L_he_h(\xi)- h^{-2} \sum_{\eta\in S(\xi)}C(\eta)e_h(\eta)\}\nonumber\\
	&= h^{-1} \mathcal R_3(\xi)+ h^{-1} \mathcal R^{'}_1(\xi)\label{Rprime}
	\end{align}
	where $C(\eta)$ is a constant depending on $\eta$ and 
	\begin{equation}\label{eq:R4}
	 \lvert \mathcal R^{'}_1(\xi)\rvert\leq CM_1h.
	\end{equation}
	Hence
	\begin{align*}
	\|L_{h,1} R_he_h\|_{h}^2 &= h^d\sum_{x\in R_h} (L_{h,1} R_he_h(x))^2\\
	&= h^d \left[ \sum_{x\in R_h^\ast} (L_{h,1} R_he_h(x))^2+ \sum_{x\in B_h^\ast} (L_{h,1} R_he_h(x))^2 \right]\\
	&\stackrel{\eqref{eq:Rh},\,\eqref{Rprime}}{=} h^d \left[ \sum_{x\in R_h^\ast} (h^{-2} \mathcal R_3(\xi))^2 +\sum_{x\in B_h^\ast} (h^{-1} \mathcal R_3(\xi)+ h^{-1} \mathcal R^{'}_1(\xi) )^2\right]\\
	&\stackrel{\eqref{eq:R3},\,\eqref{eq:R4}}{\le} C h^d \left[ \sum_{x\in R_h^\ast} M_3^2h^2 +\sum_{x\in B_h^\ast} (M_3^2h^4+M_1^2)\right]\\
	&\le C\left[M_3^2h^2+ h(M_3^2h^4+M_1^2)\right]
	\end{align*}
	where the last inequality holds as the number of points in $B_h^\ast$ is $O(h^{-(d-1)})$ which follows from \citet[Lemma 5.4]{penrose_rgg} due to assumption of a $C^2$ boundary.
	Finally using Theorem 4.2 and Lemma 3.1 of \cite{thomee} we obtain
	\begin{align}\label{eq:errorbound}
	\|R_he_h\|_{h}^2 \le C\left[M_3^2h^2+ h(M_3^2h^4+M_1^2)\right]
	\end{align}
	which completes the proof.
\end{proof}
\begin{remark}
Note that in the above proof we used Theorem 4.2 of~\cite{thomee} which requires the domain to satisfy a property called $\mathcal B_1^*$. In the same article it is pointed out that for any domain $\mathcal B_1^*$ holds by definition.
\end{remark}

\begin{theorem}\label{thm:spec_gap}
Let $A_h$ be the matrix $h^2L_h$ and let $\mu_1^{(h)}$ be the smallest eigenvalue of $A_h$. Then
\[
\lambda_1 = \lim_{h\to 0} h^{-2}\mu_1^{(h)},
\]
where $\lambda_1$ is the smallest eigenvalue of $-\Delta_c$.
\end{theorem}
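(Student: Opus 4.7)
Since $A_h = h^2 L_h$, the eigenvalues of $A_h$ are $h^2$ times those of $L_h$, so $h^{-2}\mu_1^{(h)} = \lambda_1^{(h)}$, the smallest Dirichlet eigenvalue of $L_h$ on grid functions supported in $R_h$. The claim thus reduces to $\lambda_1^{(h)}\to\lambda_1$, and I will prove matching $\limsup$ and $\liminf$ bounds through Rayleigh--Ritz. For the upper bound let $u_1$ denote the first Dirichlet eigenfunction of $-\Delta_c$ on $D$, normalized so that $\|u_1\|_{L^2(D)}=1$; by interior elliptic regularity $u_1$ is smooth inside $D$. Multiply it by a smooth cutoff $\eta_\delta$ with $\eta_\delta\equiv 1$ on $\{\mathrm{dist}(\cdot,\partial D)\ge 2\delta\}$ and $\eta_\delta\equiv 0$ on $\{\mathrm{dist}(\cdot,\partial D)\le\delta\}$, and set $\delta=h^\alpha$ for some small $\alpha>0$. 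For $h$ small enough, $\delta>Kh$ and the restriction $v_h:=(u_1\eta_\delta)|_{R_h}$ vanishes on $B_h$, making it an admissible test vector. The $i=1$ term in $\langle v_h,L_h v_h\rangle_h$ is a Riemann-sum approximation to $\|\nabla(u_1\eta_\delta)\|_{L^2}^2$, while for $i\ge 2$ the $i$-th term is controlled by $C(h^2/(2d))^{i-1}\|u_1\eta_\delta\|_{C^{2i}(\{\mathrm{dist}(\cdot,\partial D)\ge\delta\})}^2$, which is negligible for $\alpha$ chosen small enough, since the $C^{2i}$ norm grows only polynomially in $\delta^{-1}$. Letting $h\to 0$ first and then $\delta\to 0$, the Rayleigh quotient converges to $\|\nabla u_1\|_{L^2}^2=\lambda_1$.

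\textbf{Lower bound.} For any grid function $v$ supported in $R_h$, every term $\langle v,(-\Delta_h)^i v\rangle_h$ is non-negative: for even $i$ it equals $\|(-\Delta_h)^{i/2}v\|_h^2$, and for odd $i$ one writes $(-\Delta_h)^i=(-\Delta_h)^{(i-1)/2}(-\Delta_h)(-\Delta_h)^{(i-1)/2}$ and invokes positivity of $-\Delta_h$. Combined with $\kappa_i\ge 0$, this yields the quadratic-form inequality $\langle v,L_h v\rangle_h\ge \langle v,-\Delta_h v\rangle_h$, hence $\lambda_1^{(h)}\ge\tilde\lambda_1^{(h)}$, where $\tilde\lambda_1^{(h)}$ is the smallest Dirichlet eigenvalue of $-\Delta_h$ on $R_h$. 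The convergence $\tilde\lambda_1^{(h)}\to\lambda_1$ is the classical spectral convergence of finite-difference Laplacians on a $C^2$ domain: one takes a minimizer $v_h$ with $\|v_h\|_h=1$, extends it via multilinear interpolation to $\tilde v_h\in H_0^1(D)$ (by zero outside the admissible grid cells), notes that $\|\tilde v_h\|_{L^2}^2\to 1$ and $\|\nabla\tilde v_h\|_{L^2}^2\le\tilde\lambda_1^{(h)}+o(1)$, extracts by Rellich compactness a subsequential $L^2$-limit $v^*\in H_0^1(D)$ of unit norm, and concludes $\lambda_1\le\|\nabla v^*\|_{L^2}^2\le\liminf_h \tilde\lambda_1^{(h)}$ from weak lower semi-continuity of the Dirichlet energy. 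Combined with the upper bound, this closes the proof.

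\textbf{Main obstacle.} The principal technical point is calibrating the cutoff $\delta=\delta(h)$ in the upper bound: $\delta$ must exceed $Kh$ so that $v_h$ respects the $K$-layer Dirichlet boundary $\partial_{K,h}R_h$, yet the derivatives of $u_1\eta_\delta$ entering the higher-order terms $(h^2/(2d))^{i-1}\langle v_h,(-\Delta_h)^i v_h\rangle_h$ grow like negative powers of $\delta$, and these two constraints must be reconciled. A routine interior Schauder estimate on balls of radius $\delta/2$ bounds $\|u_1\eta_\delta\|_{C^{2i}}$ polynomially in $\delta^{-1}$, so any choice $\delta=h^\alpha$ with $\alpha$ positive but small suffices. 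A minor secondary bookkeeping item in the lower bound is ensuring the multilinear extension of $v_h$ lies in $H_0^1(D)$, which follows from the $C^2$-smoothness of $\partial D$ via a standard local straightening of charts near the boundary.
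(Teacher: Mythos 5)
Your argument is essentially correct, but it takes a genuinely different route from the paper: the paper does not prove Theorem~\ref{thm:spec_gap} at all, it simply invokes (and proposes to imitate) Theorem~8.1 of Thom\'ee, where eigenvalue convergence is extracted from the finite-difference machinery already set up there (convergence of the discrete solution operators in discrete $L^2$-type norms), consistently with how Section~\ref{sec:Thomee} is used elsewhere in the paper. You instead give a self-contained Rayleigh--Ritz proof: an upper bound by discretizing a cut-off continuum eigenfunction, where the higher-order terms die because they carry the factor $(h^2/2d)^{i-1}$ against derivative norms growing only polynomially in $\delta^{-1}$, and a lower bound by the form inequality $\langle v,L_h v\rangle_h\ge\langle v,-\Delta_h v\rangle_h$ (valid since $\kappa_i\ge 0$ and $(-\Delta_h)^i\ge 0$ on $\ell^2(h\Z^d)$), reducing to classical spectral convergence of the discrete Dirichlet Laplacian via interpolation and Rellich compactness. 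What your route buys is transparency (it explains directly why only the gradient part survives in the limit, exactly as in the variance computation of Proposition~\ref{f1}) and independence from Thom\'ee's framework; what the paper's route buys is brevity, since the needed operator estimates are already in place. Two details of yours need tightening, though neither is fatal: (i) with $\delta=h^{\alpha}$ the limits are coupled, so you cannot ``let $h\to 0$ first and then $\delta\to 0$''; run the single coupled limit with $\alpha<1/4$ (so that $h^{2(i-1)}\delta^{-4i}\to0$ for all $2\le i\le K$, and the Riemann-sum errors, which involve $C^3$-norms of $u_1\eta_\delta$, also vanish), or avoid the cutoff altogether by testing with a fixed $\phi\in C_c^\infty(D)$ whose Rayleigh quotient is within $\varepsilon$ of $\lambda_1$; (ii) the multilinear interpolant of $v_h$ need not lie in $H^1_0(D)$, since a cell with all corners in $\overline D$ can still protrude outside $D$; the standard fix is to note that the interpolants are in $H^1(\R^d)$, uniformly bounded there with the energy bound holding with constant exactly $1$ (Jensen on each cell), supported in an $h\sqrt d$-neighborhood of $\overline D$, so any subsequential $L^2$-limit $v^*$ vanishes a.e.\ outside $D$ and hence lies in $H^1_0(D)$ because $\partial D$ is $C^2$.
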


The proof of the above result follows by imitating the proof of Theorem 8.1 of~\cite{thomee} which we skip here.


\end{document}